\newtheorem{teo}{Theorem}[section]
\newtheorem{lem}[teo]{Lemma}
\newtheorem{coro}[teo]{Corollary}
\newtheorem{defi}[teo]{Definition}
\newtheorem{remark}[teo]{Remark}
\newtheorem{conj}{Conjecture}
\begin{document}
\thispagestyle{empty}

\title{\textbf{\Large{Rigidity of Lyapunov Exponents for Geodesic Flows }}}
\author{Nestor Nina Zarate\footnote{Partially Supported By Capes and CNPq} \and Sergio Roma\~na\footnote{Suppoted by Bolsa Jovem Cientista do Nosso Estado No. E-26/201.432/2022 - Brazil, NNSFC 12071202, and NNSFC 12161141002 from China.} }
\date{}
\maketitle

\textit{\,\,\,\,\,\,\,\,\,\,\,\,\,\,\,\,\,\,\,\,\,\,\,\,\,\,\,\,\,\,\,\,\,\,\,\,\,\,\,\,\,\,\,\,  This paper is dedicated to the memory of Nestor Nina Zarate }
 
\begin{abstract}
In this paper, we study rigidity problems between Lyapunov exponents along periodic orbits and geometric structures.  More specifically, we prove that for a surface  $M$ without focal points, if the value of the Lyapunov exponents is constant over all periodic orbits, then  $M$ is the flat $2$-torus or a surface of constant negative curvature.  We obtain the same result for the case of Anosov geodesic flow for surface, which generalizes  C. Butler's result  \cite{Bu} in dimension two. Using completely different techniques, we also prove an extension of \cite{Bu} to the finite volume case, where the value of the Lyapunov exponents along all periodic orbits is constant, being the maximum or minimum possible.

\begin{quote}
\textbf{Keywords: Lyapunov Exponent, Rigidity, Anosov flow.}
\end{quote}

\end{abstract}

\section{Introduction.}
Rigidity problems in dynamics and geometry are very interesting research topics in mathematics because they provide us with a very deep connection between two different areas. Many researchers around the world are unraveling different types of rigidity between these two areas. We can cite several of these authors (\cite{BCG, Con, G, CFF, Co, CK, CK2, FO, FK, F, Ot1, Ot2}. 
We invite the reader to check the introduction of the articles \cite{IR2} and \cite{Bu}, which provide an overview of the different results that have already been obtained. \\
In this paper, we focus our attention on rigidity problems of Lyapunov exponents. We will see that, under certain conditions on the Lyapunov exponents of an Anosov geodesic flow (of geodesic flow of surface without focal points), we can obtain rigidity on the sectional curvature. 
Our goal is to extend the result over the rigidity of equality of Lyapunov exponents for geodesic flows of Butler in \cite{Bu} for dimension two. \\
\indent Let $\theta$ be a periodic orbit of period $\tau$ for the geodesic flow $\phi^{t}\colon SM\to SM$ for a Riemannian manifold $M$.  Let $\chi_{1}{(\theta)},\ldots,\chi_{m-1}{(\theta)}$ be the complex eigenvalues of $D_{\theta}\phi^{\tau}:E_{\theta}^{u}\to E_{\theta}^{u}$, counted with the multiplicity. A beautiful result due to C. Butter \cite{Bu} stated that:

\begin{teo}\emph{\cite{Bu}}\label{teo4.1}\, 
Let $M$ be an $m$-dimensional compact negatively curved Riemannian manifold. Suppose that  
\begin{eqnarray*}
\left|\chi_{i}{(\theta)}\right|=\left|\chi_{j}{(\theta)}\right|; \hspace{0,2cm} 1\leq i,j\leq m-1,
\end{eqnarray*}
for every periodic point $\theta$ of the geodesic flow $\phi:SM\to SM$. Then $M$ is homothetic to a compact quotient of $\mathbb{H}_{\mathbb{R}}^{m}.$
\end{teo}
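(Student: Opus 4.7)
The plan is to show that the pinching hypothesis on periodic orbits forces the derivative cocycle on the unstable bundle to be, after a H\"older change of inner product, a conformal (scalar) cocycle, placing us in the setting of rigidity of uniformly quasiconformal Anosov geodesic flows.

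First, since $M$ has negative sectional curvature, $\phi^t$ is Anosov and the invariant bundles $E^s, E^u$ are H\"older continuous. I would apply Kalinin's approximation theorem for H\"older matrix cocycles: if the Lyapunov spectrum at every periodic orbit is a single point, then the Lyapunov spectrum with respect to every ergodic invariant Borel probability measure is also that single point. Applied to $D\phi^t|_{E^u}$, the hypothesis $|\chi_i(\theta)|=|\chi_j(\theta)|$ yields a single unstable exponent $\lambda>0$ valid against every invariant measure, and symmetrically $-\lambda$ on $E^s$ via the contact structure of the geodesic flow.

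Next I would invoke the Kalinin--Sadovskaya rigidity theorem for cocycles: a fiber-bunched H\"older cocycle whose Lyapunov spectrum collapses to a single point for every invariant measure is H\"older cohomologous to a conformal cocycle. Concretely, this produces a continuous inner product on $E^u$ in which $D\phi^t|_{E^u}$ acts as $e^{\lambda t}$ times an isometry, and similarly on $E^s$ with exponent $-\lambda$. This is precisely the statement that the geodesic flow $\phi^t$ is uniformly quasiconformal on the strong stable and strong unstable bundles.

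Finally, uniform quasiconformality of a contact Anosov geodesic flow is extremely restrictive: by rigidity results of Sadovskaya, Yue, and Fang (with dimension-two being Hurder--Katok), such a flow is smoothly conjugate, up to time change, to the geodesic flow of a compact rank-one locally symmetric space, and the equality-of-exponents condition selects the real hyperbolic case. Since both flows are contact and their Liouville measure classes correspond, the smooth conjugacy upgrades to a homothety of the underlying manifolds, yielding that $M$ is homothetic to a compact quotient of $\mathbb{H}^m_{\mathbb{R}}$. The main obstacle is verifying the fiber-bunching condition needed to invoke Kalinin--Sadovskaya: this requires quantitatively comparing the H\"older exponent of $E^u$ with the pointwise spread of expansion rates of the Jacobi cocycle, and is the technical heart of Butler's argument, typically handled by exploiting Rauch comparison against the pinching constants of $M$ and, when necessary, passing to suitable $D\phi^t$-invariant sub-bundles where bunching is automatic before propagating the conformal reduction globally.
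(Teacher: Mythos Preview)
The theorem you are addressing (Theorem~\ref{teo4.1}) is not proved in the present paper: it is quoted from Butler's work \cite{Bu} as motivation and background, and the paper gives no argument for it whatsoever. There is therefore no ``paper's own proof'' to compare your proposal against.

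That said, your sketch is a faithful outline of Butler's actual strategy in \cite{Bu}: pass from the periodic-orbit hypothesis to collapse of the Lyapunov spectrum for all invariant measures via Kalinin's approximation theorem, upgrade this to H\"older conformality of the unstable cocycle via Kalinin--Sadovskaya, and then invoke the quasiconformal rigidity results (Sadovskaya, Fang, Yue) to conclude real-hyperbolic local symmetry. You correctly identify fiber bunching as the main technical obstruction; Butler handles it not by Rauch comparison per se but by a bootstrapping argument that first uses the periodic-exponent hypothesis together with the Liv\v{s}ic theorem and a regularity estimate for the invariant bundles to establish bunching a posteriori. One caution: the final step from smooth orbit equivalence of the geodesic flows to homothety of the manifolds is not automatic from the conjugacy alone---Butler uses the minimal-entropy rigidity of Besson--Courtois--Gallot \cite{BCG} rather than a direct contact/Liouville argument, and your sketch glosses over this point.
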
   

In particular, Butler's result above claims that if each periodic orbit of the geodesic flow defined on a manifold of negative curvature, has exactly one Lyapunov exponent on the unstable (or stable) bundle then the manifold has constant negative curvature. 

\noindent Since the geodesic flow on a compact manifold of negative curvature is an Anosov flow, then supported by Butler's result, in \cite{IR} the authors proposed the following conjecture (see \cite[Conjecture 1]{IR}).

\begin{conj}\label{conj.1}Let $M$ be a complete Riemannian manifold with finite volume, whose
geodesic flow is Anosov. If the unstable Lyapunov exponents are constant in all periodic orbits, then $M$ has constant negative sectional curvature.
\end{conj}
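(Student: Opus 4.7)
The plan is to imitate Butler's compact argument from \cite{Bu} while upgrading each step to the finite volume setting. The first ingredient is a version of the Livsic theorem for matrix cocycles, in the spirit of Kalinin and Sadovskaya, applied to the derivative cocycle $D\phi^t|_{E^u}$. The hypothesis that all unstable Lyapunov exponents coincide on every periodic orbit is exactly the periodic-data obstruction-free assumption in these theorems, and from it one deduces that $D\phi^t|_{E^u}$ is H\"older cohomologous to a conformal cocycle: there exists a H\"older Riemannian metric $g^u$ on $E^u$ such that $\|D\phi^t v\|_{g^u} = e^{\lambda(t,\theta)}\|v\|_{g^u}$ for all $v \in E^u_\theta$, with $\lambda$ independent of the direction of $v$. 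The analogous statement on $E^s$ follows by time reversal, making the geodesic flow conformally Anosov.

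Once conformality is established, I would invoke the rigidity theorem of Kanai, refined by Benoist--Foulon--Labourie and Sadovskaya, which asserts that a conformally Anosov geodesic flow on a closed negatively curved manifold must come from a locally symmetric metric. Since the equality of all unstable Lyapunov exponents rules out every rank one symmetric space of noncompact type except real hyperbolic space, this forces $M$ to have constant negative sectional curvature. To convert the smooth conjugacy produced here into an actual isometry, I would appeal to the marked length spectrum rigidity of Otal and Croke in rank one.

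The main obstacle is the finite volume, non-compact nature of $M$. Kalinin's Livsic theorem, Kanai's rigidity theorem, and the marked length spectrum rigidity results are all traditionally stated for compact manifolds, and each requires non-trivial adaptation to handle cusps. Periodic orbits remain dense in $SM$ for a finite volume negatively curved manifold, which gives some hope of running Kalinin's argument globally, but their escape rate into the cusps must be controlled quantitatively to obtain uniformly H\"older transfer functions. Kanai's rigidity in the non-compact setting is even more delicate, since his reconstruction of the symmetric structure relies on compactness of the unit tangent bundle. My intended route is to first prove constant curvature on a compact core $K \subset M$ using the argument above, and then to use the real analyticity of locally symmetric metrics together with the completeness of $M$ to extend the constant curvature condition into the cusps; this final propagation step is where I expect the real work to lie.
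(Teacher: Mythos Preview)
This statement is labeled a \emph{conjecture} in the paper and is not proved there in full generality; the paper explicitly says ``The conjecture remains open for the general case, even on compact manifolds.'' What the paper actually establishes are two partial results: Theorem~\ref{teo4.6} settles the conjecture for compact surfaces (via Kalinin's approximation, the entropy rigidity of De Simoi--Leguil--Vinhage--Yang, and Croke's marked length spectrum rigidity), and Theorem~\ref{thm4.5} treats the finite volume case under the additional hypotheses of pinched negative curvature and that the common Lyapunov exponent equals one of the two pinching constants (via a direct Riccati-equation argument plus density of periodic orbits). Neither of these is the route you sketch.

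Your proposal has genuine gaps beyond the ones you flag. First, the conjecture assumes only that the geodesic flow is Anosov, not that $M$ has negative curvature; the paper stresses this distinction (see the remarks after Theorem~\ref{teo4.1} and the references to \cite{Ebe}, \cite{IR1}). The Kanai and Benoist--Foulon--Labourie rigidity theorems you invoke are stated for closed negatively curved manifolds, so even in the compact case your argument does not cover the conjecture as posed---and indeed the compact, non-negatively-curved case beyond surfaces is still open. Second, your endgame of proving constant curvature on a ``compact core'' and then propagating by real analyticity cannot work as written: the rigidity theorems you cite require a \emph{closed} manifold, not a compact manifold with boundary, so you cannot apply them to a core; and you have no a priori reason to believe the given metric is real analytic, so analytic continuation is unavailable until \emph{after} you already know the metric is locally symmetric. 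In short, the obstacles you identify (extending Kalinin and Kanai to cusps) are real, but even granting those extensions your outline would only recover Butler's compact negatively curved theorem, not the conjecture.
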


\noindent The main goal of this paper is to prove the last conjecture in dimension two. In fact, we prove some rigidity results for conservative Anosov flows on manifolds of dimension three, which encompasses the case of geodesic Anosov flows of compact surfaces.\\ 

We said that a flow $\phi^t$ is conservative if it preserves a smooth volume measure. In this context, our first result is:

\begin{teo}\label{New Theorem}
Let $\phi^t: N\to N$ be a $C^k$-conservative Anosov flow with $k\geq 5$ on a three-dimensional manifold $N$. The following are equivalents: 
\begin{itemize}
\item[\emph{(1)}] There is $\alpha>0$ such that for all $\theta\in Per(\phi)$ and $\xi\in E_{\theta}^{u}\backslash\{0\}$ $$\chi^{+}(\theta,\xi)=\lim_{t\to +\infty}\dfrac{1}{t}\log\|d_{\theta}\phi^{t}(\xi)\|=\alpha.$$

\item[\emph{(2)}] The volume $m$ is a measure of maximal entropy.

\item[\emph{(3)}]For any $\epsilon>0$, $\phi^t$ is $C^{k-\epsilon}$  conjugate to an Algebraic flow.
\end{itemize}
\end{teo}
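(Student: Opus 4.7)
In dimension three both $E^u$ and $E^s$ are one-dimensional, so the Lyapunov exponent along a periodic orbit $\theta$ of period $\tau$ is the $\xi$-independent quantity $\chi^+(\theta)=\tau^{-1}\log|J^u_\tau(\theta)|$, where $J^u_t$ is the infinitesimal unstable expansion cocycle. The plan is to begin with $(1)\Leftrightarrow(2)$. Assuming $(1)$, the H\"older potential $\varphi^u:=-\log J^u_1$ averages to $-\alpha$ on every closed orbit, so the Livshits periodic-orbit theorem for Anosov flows yields a H\"older function $u$ with $\varphi^u+\alpha=u\circ\phi^1-u$, i.e.\ $\varphi^u$ is cohomologous to the constant $-\alpha$. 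Equilibrium states depend only on the cohomology class of the potential, and for a conservative Anosov flow the smooth volume $m$ is the SRB measure, that is, the equilibrium state of $\varphi^u$; hence $m$ coincides with the equilibrium state of the zero potential, which is precisely the Bowen--Margulis measure of maximal entropy $\mu_{\max}$. The converse runs in reverse: if $m=\mu_{\max}$, uniqueness of equilibrium states forces $\varphi^u$ to be H\"older-cohomologous to a constant, necessarily $-h_{\mathrm{top}}(\phi)$, and evaluating the cohomology along any periodic orbit gives $\chi^+(\theta)=h_{\mathrm{top}}(\phi)$, so one may take $\alpha:=h_{\mathrm{top}}(\phi)$.

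The direction $(3)\Rightarrow(1)$ is immediate from the structure of algebraic Anosov flows on $3$-manifolds: constant-time suspensions of hyperbolic toral automorphisms and geodesic flows on closed surfaces of constant negative curvature both have a conformal derivative cocycle on $E^u$ with a universal expansion rate, so every periodic orbit produces the same exponent.

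The substance of the theorem is $(2)\Rightarrow(3)$. The idea is to exploit the coincidence $m=\mu_{\max}$ to upgrade the regularity of the weak-stable and weak-unstable foliations. The equality forces both the stable and the unstable Jacobians to be H\"older-cohomologous to constants; feeding these cohomological equations into the de la Llave--Marco--Moriy\'on regularity machinery bootstraps the transfer function to $C^{k-\epsilon}$ and thereby improves the smoothness of the holonomies along $W^{s}$ and $W^{u}$. One then invokes the classification of smooth volume-preserving Anosov flows on closed $3$-manifolds, in the spirit of Ghys and Benoist--Foulon--Labourie: once the weak foliations are sufficiently smooth, $\phi^t$ must be smoothly conjugate either to a constant-time suspension of a hyperbolic automorphism of $\mathbb{T}^2$ or to the geodesic flow of a closed hyperbolic surface, and tracking the regularity loss along the bootstrap produces the $C^{k-\epsilon}$ conjugacy claimed in $(3)$.

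The main obstacle is precisely this last step: promoting H\"older cohomology of the geometric potential to $C^{k-\epsilon}$ smoothness of the invariant foliations and their holonomies, and then threading the regularity through the classification theorem so that the final conjugacy retains the sharpest possible regularity. The restriction to dimension three is essential here, since the classification of smooth conservative Anosov flows on which the reduction rests has no known analogue in higher dimensions.
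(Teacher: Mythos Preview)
Your proposal is correct, but it diverges from the paper's argument in interesting ways.

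For $(1)\Rightarrow(2)$, the paper does not use Livshits or the thermodynamic formalism at all. Instead it invokes Kalinin's approximation theorem: the Lyapunov spectrum of any ergodic measure is a limit of Lyapunov spectra of periodic measures, so under $(1)$ \emph{every} ergodic measure has unstable exponent $\alpha$. Ruelle's inequality then gives $h_\mu(\phi)\le \alpha$ for all ergodic $\mu$, while Pesin's formula gives $h_m(\phi)=\alpha$ for the smooth volume; the variational principle finishes. This argument works verbatim in any dimension (it is the paper's Theorem~1.4), whereas your Livshits route uses that $E^u$ is one-dimensional so that a single scalar potential encodes the unstable Jacobian. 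On the other hand, your argument is more elementary (no Kalinin), yields the direct equivalence $(1)\Leftrightarrow(2)$ rather than a cycle, and extracts the extra identification $\alpha=h_{\mathrm{top}}(\phi)$ for free. One small point: you phrase Livshits for the time-$1$ map ($\varphi^u+\alpha=u\circ\phi^1-u$) while the hypothesis concerns periodic orbits of the flow, whose periods need not be integers; you should either pass to the infinitesimal potential $a(\theta)=\frac{d}{dt}\big|_{t=0}\log\|D\phi^t|_{E^u_\theta}\|$ and apply the flow version of Livshits, or argue via the suspension picture. This is cosmetic.

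For $(2)\Rightarrow(3)$ the paper does exactly what you identify as ``the main obstacle'': it quotes the entropy-rigidity theorem of De~Simoi--Leguil--Vinhage--Yang as a black box. Your sketch (cohomology of the geometric potential, de~la~Llave--Marco--Moriy\'on bootstrap, Ghys/Benoist--Foulon--Labourie classification) is a fair outline of what goes into that theorem, but the paper makes no attempt to reprove it. For $(3)\Rightarrow(1)$ the two arguments coincide.
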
 

Geodesic flows on compact manifolds are natural examples of conservative flows, in fact, the \text{Liouville} measure is always an invariant measure for the geodesic flow. Therefore, using the structure of the geodesic flow, we use the Theorem \ref{New Theorem}
to prove the Conjecture \ref{conj.1} for surfaces. More specifically 

\begin{teo}\label{teo4.6}\emph{[\textbf{Main Theorem}]}\, Let $\phi^{t}:SM\to SM$ be the $C^{k}$- Anosov geodesic flow with $k\geq 5$ on a compact surface $M$. The following are equivalents:
\begin{itemize}
\item[\emph{(1)}] There is $\alpha>0$ such that for all $\theta\in Per(\phi)$ and $\xi\in E_{\theta}^{u}\backslash\{0\}$ $$\chi^{+}(\theta,\xi)=\lim_{t\to +\infty}\dfrac{1}{t}\log\|d_{\theta}\phi^{t}(\xi)\|=\alpha.$$

\item[\emph{(2)}] The surface $M$ has constant negative curvature $K_{M}=-\alpha^{2}$.

\end{itemize}
\end{teo}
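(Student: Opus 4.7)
The easy direction (2) $\Rightarrow$ (1) is a direct Jacobi-field computation. On a surface of curvature $-\alpha^2$, the Jacobi equation $J''+KJ=0$ becomes $J''-\alpha^{2}J=0$, so the unique unstable Jacobi field along any geodesic grows exactly as $e^{\alpha t}$. Since $E^{u}$ is identified with unstable Jacobi fields under the Sasaki metric, $\|d_{\theta}\phi^{t}(\xi)\|$ grows like $e^{\alpha t}$ up to bounded multiplicative constants (coming from the uniformly equivalent Sasaki and Jacobi norms), and thus $\chi^{+}(\theta,\xi)=\alpha$ for every $\theta\in SM$ and every $\xi\in E^{u}_{\theta}\setminus\{0\}$.

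For the main direction (1) $\Rightarrow$ (2), the plan is to reduce to Theorem \ref{New Theorem}. First I would check that its hypotheses apply: since $M$ is a compact surface, $SM$ is a compact three-dimensional manifold, and the geodesic flow $\phi^{t}$ is conservative because it preserves the Liouville probability measure $m$. The regularity $C^{k}$ with $k\geq 5$ passes from the metric on $M$ to the flow on $SM$. By hypothesis (1), the Lyapunov condition in Theorem \ref{New Theorem}(1) holds, so we can invoke the equivalences therein. In particular, the Liouville measure $m$ is a measure of maximal entropy for $\phi^{t}$, and furthermore $\phi^{t}$ is $C^{k-\epsilon}$-conjugate to an algebraic Anosov flow on a closed $3$-manifold.

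From here I would conclude constant negative curvature by invoking a classical rigidity result, of which there are two natural choices. The direct route is Katok's entropy-rigidity theorem for surfaces (extended to Anosov geodesic flows without the a priori negative curvature assumption, as is standard for surfaces with Anosov geodesic flow): if the Liouville measure realizes the topological entropy of a surface Anosov geodesic flow, then the metric has constant negative curvature. An alternative route uses Theorem \ref{New Theorem}(3): in dimension three the only transitive algebraic Anosov flows are suspensions of hyperbolic toral automorphisms and geodesic flows of constant curvature surfaces; the topology of $SM$ (a non-trivial circle bundle over a surface of genus $\geq 2$, since Anosov implies $\chi(M)<0$) rules out the suspension, so the algebraic model is the geodesic flow on some hyperbolic surface $N$. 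The $C^{k-\epsilon}$ conjugacy then matches the marked length spectra of $M$ and $N$, and Otal's marked length spectrum rigidity yields an isometry between $(M,g)$ and $N$. Either way, $M$ has constant negative curvature $K_{M}=-k^{2}$; combining with the easy direction, the common Lyapunov exponent on periodic orbits is $k$, forcing $k=\alpha$ and $K_{M}=-\alpha^{2}$.

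The main obstacle is not the application of Theorem \ref{New Theorem}, which is essentially mechanical once conservativity and dimension are checked, but the bridge from the conclusions of that theorem to an \emph{isometric} statement on $(M,g)$. Smooth conjugacy of geodesic flows does not a priori preserve Riemannian data; it is the invariance of periods (equivalently, of the marked length spectrum) plus genuine surface rigidity (Katok's entropy rigidity or Otal's length spectrum rigidity) that closes the argument. The numerical identification $k=\alpha$ at the end is then immediate from the hyperbolic Jacobi computation performed in the easy direction.
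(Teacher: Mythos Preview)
Your proposal follows essentially the same route as the paper: invoke Theorem~\ref{New Theorem} to get a smooth conjugacy to an algebraic model, rule out the suspension case, and then appeal to a surface rigidity theorem to upgrade the conjugacy to an isometry. The paper rules out the suspension using Plante's dichotomy (Theorem~\ref{teo4.11}) together with the fact that a geodesic flow admits no global cross-section, while you argue via the topology of $SM$; both are fine. For the final step the paper cites Croke~\cite{Cr}, whereas you propose Otal or Katok.

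One caution on your choice of rigidity theorem. Otal's marked length spectrum rigidity requires \emph{both} metrics to have negative curvature, and Katok's entropy rigidity (as stated in~\cite{Katok82}) is formulated for surfaces without focal points; but in Theorem~\ref{teo4.6} the surface $M$ is only assumed to carry an Anosov geodesic flow, which gives no conjugate points (Klingenberg) but neither negative curvature nor the absence of focal points---indeed the paper explicitly remarks that the Anosov condition does not imply no focal points. So your ``direct Katok route'' is not obviously available here (the paper reserves it for Theorem~\ref{Teo_New_1}, where no focal points is a hypothesis), and Otal does not apply as stated. Croke's result in~\cite{Cr} is the safer citation: it covers the case where one of the two surfaces has nonpositive curvature (the hyperbolic model does), which is exactly what is needed.
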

\noindent The last two results are also valid looking for the Lyapunov exponents on the stable subbundle.
\ \\

The theorem \ref{teo4.6} extends Butler's result in dimension two since we just assume the Anosov condition of geodesic flow without any restrictions on the surface curvature.
The main idea behind the proof of this theorem is to use Kalinin's result (cf. \cite{Kal}) to show that the Liouville measure is a maximum entropy measure (MME), and then find some geometric rigidity for this fact. For this sake, we prove that the hypotheses of \ref{teo4.6} are equivalent to having the Liouville measure as an MME in any dimension. More precisely,  

\begin{teo}\label{teo4.14}
Let $M$ be a compact Riemannian manifold with Anosov geodesic such that the unstable \emph{(}or stable\emph{)} Lyapunov exponents are constant along the periodic orbits, then the Liouville measure $\mathcal{L}$ is a measure of maximal entropy, \emph{i.e.}, $h_{top}(\phi)=h_{\mathcal{L}}(\phi).$
\end{teo}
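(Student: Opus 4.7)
The plan is to show that, under the hypothesis, every ergodic $\phi$-invariant Borel probability measure has the same sum of positive Lyapunov exponents, and then compare entropies via Pesin--Ruelle theory. The key input is Kalinin's approximation theorem \cite{Kal}: for a hyperbolic system (in particular for an Anosov flow, by applying the result to a time-$t$ map), the Lyapunov spectrum of any ergodic invariant measure can be approximated with arbitrary precision by the Lyapunov spectra of periodic orbits. Applied to $\phi^{t}:SM\to SM$, this says that for every ergodic $\phi$-invariant measure $\mu$ there exists a sequence of periodic orbits $\theta_{n}$ whose unstable Lyapunov exponents converge to those of $\mu$.

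By hypothesis, every periodic orbit carries the same unstable Lyapunov spectrum $(\chi_{1},\ldots,\chi_{m-1})$ (with $m=\dim M$, counted with multiplicity). Combining this with Kalinin's theorem, every ergodic $\phi$-invariant measure $\mu$ must share this spectrum on $E^{u}$, so the sum $\Lambda:=\sum_{i=1}^{m-1}\chi_{i}$ is independent of the ergodic measure chosen. Ruelle's inequality then gives, for every ergodic $\phi$-invariant measure $\mu$,
$$h_{\mu}(\phi)\;\leq\;\sum_{\chi_{i}(\mu)>0}\chi_{i}(\mu)\;=\;\Lambda,$$
while Pesin's entropy formula, applied to the smooth (Liouville) measure $\mathcal{L}$, upgrades the inequality to an equality for $\mathcal{L}$:
$$h_{\mathcal{L}}(\phi)\;=\;\sum_{\chi_{i}(\mathcal{L})>0}\chi_{i}(\mathcal{L})\;=\;\Lambda.$$
Passing from ergodic to general invariant measures via the ergodic decomposition, and then applying the variational principle, one obtains $h_{top}(\phi)=\sup_{\mu}h_{\mu}(\phi)=\Lambda=h_{\mathcal{L}}(\phi)$, so $\mathcal{L}$ is a measure of maximal entropy.

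The main technical obstacle is the careful invocation of Kalinin's approximation theorem in the flow setting: one must ensure that the approximation holds for the \emph{entire} unstable spectrum (with multiplicities), not merely for the top exponent, so that the sums $\Lambda$ truly coincide across ergodic measures. Once this is secured, the remainder is routine and works in any dimension $m$. The stable-exponent version of the hypothesis is handled symmetrically, for instance by applying the same argument to the time-reversed flow $\phi^{-t}$, for which unstable and stable subbundles exchange roles.
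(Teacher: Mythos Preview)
Your proof is correct and follows essentially the same route as the paper's: Kalinin's approximation forces every ergodic invariant measure to inherit the periodic unstable spectrum, whence Ruelle's inequality bounds all metric entropies by $\Lambda$ while Pesin's formula gives equality for the Liouville measure, and the variational principle finishes. The only cosmetic differences are that the paper explicitly invokes the ergodicity of $\mathcal{L}$ for Anosov flows (rather than passing through an ergodic decomposition) and writes the common unstable exponent as a single constant $\alpha$.
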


\noindent The Theorem \ref{teo4.14} is linked to the \emph{Katok Entropy Conjecture} (see Conjecture \ref{conj411}). \\
Furthermore, due to a good understanding of the geodesic flow of surfaces, we can use Theorem \ref{teo4.14} to obtain a very nice rigidity for the geodesic flows of surfaces without focal points.



\begin{teo}\label{Teo_New_1}Let $M$ be a compact surface without focal points with a $C^2$ Riemannian metric and let $\phi^t$ be its geodesic flow. The following are equivalents:
\begin{itemize}
\item[\emph{(1)}] There is $\alpha\geq 0$ such that for all $\theta\in Per(\phi)$ and $\xi\in G_{\theta}^{u}\backslash\{0\}$ $$\chi^{+}(\theta,\xi)=\lim_{t\to +\infty}\dfrac{1}{t}\log\|d_{\theta}\phi^{t}(\xi)\|=\alpha,$$
where $G^{u}_{\theta}$ is the unstable Green subdundle. 
\item[\emph{(2)}] The surface $M$ has curvature $K_M=-\alpha^2$. Moreover, if $\alpha=0$, then $M=\mathbb{T}^2$, the flat torus.
\end{itemize}
\end{teo}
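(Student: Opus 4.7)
The direction $(2) \Rightarrow (1)$ is a direct computation: on a surface of constant curvature $-\alpha^2$ the Jacobi equation reads $J''-\alpha^2 J=0$, so the unstable Jacobi field grows like $e^{\alpha t}$ and $\chi^+\equiv\alpha$; on the flat torus, Jacobi fields are affine and $\chi^+\equiv 0$ along every orbit. I therefore concentrate on $(1)\Rightarrow(2)$, splitting according to whether $\alpha>0$ or $\alpha=0$.

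For the case $\alpha>0$, my plan is to reduce to the Anosov setting and invoke Theorem~\ref{teo4.6}. By Eberlein's classical characterization, the geodesic flow of a compact surface without focal points is Anosov if and only if it admits no flat strip. Suppose for contradiction a flat strip $S$ exists. Along any geodesic lying in $S$ the sectional curvature vanishes, so the Jacobi equation reduces to $J''=0$, the unstable Green Jacobi field grows linearly, and the Lyapunov exponent on $G^u$ vanishes on orbits in $S$. Since $S$ lies in the non-wandering set, and since periodic orbits are dense in the non-wandering set for compact surfaces without conjugate points (results of Coud\`ene-Schapira and Gelfert-Ruggiero), $S$ contains a periodic orbit $\theta$ with $\chi^+(\theta,\xi)=0$, contradicting $\alpha>0$. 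Hence $\phi^t$ is Anosov and Theorem~\ref{teo4.6} yields $K_M\equiv-\alpha^2$.

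For the case $\alpha=0$, my plan is to prove that $\chi(M)=0$ and then apply Hopf's theorem. Surfaces without conjugate points satisfy the classical Hopf inequality $\int_M K\,dm\le 0$, hence by Gauss-Bonnet $\chi(M)\le 0$. Assume for contradiction $\chi(M)<0$, i.e.\ genus at least two. By the positive-entropy theorem for higher-genus surfaces without conjugate points (Freire-Ma\~n\'e in the Anosov case, extended by Knieper and by Gelfert-Ruggiero), one has $h_{top}(\phi^t)>0$. The variational principle then yields an ergodic invariant measure $\mu$ with $h_\mu(\phi^t)>0$; Ruelle's inequality gives $\chi^+(\mu)>0$; and Katok's horseshoe/approximation theorem produces hyperbolic periodic orbits with Lyapunov exponent close to $\chi^+(\mu)>0$, contradicting $\alpha=0$. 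Therefore $\chi(M)=0$; passing if necessary to the orientable double cover, $M$ is the $2$-torus, and Hopf's theorem (\emph{a torus without conjugate points is flat}) forces $K\equiv 0$. Hence $M$ is the flat $\mathbb{T}^2$.

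The principal difficulties I anticipate are twofold. In the case $\alpha>0$, I need density of periodic orbits in the non-wandering set of a possibly non-Anosov flow; while this is known for surfaces without conjugate points it is significantly more delicate than the classical Anosov closing lemma and must be invoked carefully. In the case $\alpha=0$, Katok's approximation theorem typically requires $C^{1+\varepsilon}$ regularity of the flow, which is borderline at the stated $C^2$ metric regularity (giving only a $C^1$ flow); hence either a slightly stronger regularity has to be assumed or one must extract the hyperbolic periodic orbit through a more refined Pesin-theoretic argument adapted to the surface setting.
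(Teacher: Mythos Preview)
Your case split on $\alpha$ and your treatment of $\alpha=0$ (Gauss--Bonnet forces genus one, then Hopf forces flatness) are reasonable and close in spirit to the paper's genus-one branch. But the $\alpha>0$ argument has two genuine gaps.

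\textbf{Regularity: you cannot invoke Theorem~\ref{teo4.6}.} This is the central problem. Theorem~\ref{teo4.6} requires the geodesic flow to be $C^k$ with $k\geq 5$, because its proof passes through the De~Simoi--Leguil--Vinhage--Yang entropy rigidity (Theorem~\ref{teo4.9}). Theorem~\ref{Teo_New_1} assumes only a $C^2$ metric, so even after you establish that the flow is Anosov you are not in a position to appeal to Theorem~\ref{teo4.6}. The paper makes exactly this point in the Remark immediately following its proof. What the paper does instead, once the flow is Anosov, is to use Theorem~\ref{teo4.14} (Kalinin + Ruelle + Pesin, no high regularity needed) to conclude that Liouville is the measure of maximal entropy, and then invoke Katok's entropy rigidity for $C^2$ surfaces without focal points \cite[Corollary~3.3]{Katok82} to obtain constant negative curvature. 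That route bypasses the $C^5$ obstruction entirely, and it is the key mechanism you are missing.

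\textbf{Producing a periodic orbit with zero (or small) exponent.} Your reduction to Anosov is also incomplete. Density of periodic orbits in the non-wandering set gives periodic orbits \emph{near} the flat strip, not \emph{in} it, and since the Lyapunov exponent is not continuous those nearby periodic orbits could have large exponent. The paper handles this with a tailored shadowing lemma for surfaces without focal points (Lemma~\ref{Shad_Lem}) and the curvature bound~(\ref{Lyapunov and Curvature}): given $\theta\in\mathcal{H}$ (so $K\circ\gamma_\theta\equiv 0$), transitivity plus shadowing produces closed geodesics $\beta_k$ of period $P_k\to\infty$ that spend almost all their time near $\gamma_\theta$, so that $P_k^{-1}\int_0^{P_k}K(\beta_k)\to 0$ and hence $\chi^+(\beta_k)\to 0$ (Lemma~\ref{Lem_New_1}). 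This is how one actually manufactures periodic orbits with exponent contradicting the constancy of $\alpha$; the bare Coud\`ene--Schapira/Gelfert--Ruggiero density statement you cite does not give control on the exponent.
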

\noindent Note that, in the last theorem, we can relax the high differentiability condition in the Theorem \ref{teo4.6} and even relax the hyperbolicity condition. Theorem \ref{teo4.6} and Theorem \ref{Teo_New_1} may seem similar, but they are not. First, note that the Anosov condition does not imply the absence of focal points. Furthermore, they have different differentiability classes. \\

Finally, using techniques completely different from those used in the proof of all previous theorems, we extend Butler's results in the case of finite volume and pinched negative curvature,  when the value of the Lyapunov exponents is maximum or minimum,  more specifically:

\begin{teo}\label{thm4.5}
Let $M$ be a complete Riemannian manifold of finite volume and such that $-c^{2}\leq K_{M}\leq -b^{2}<0$. Let $\phi^{t}\colon SM\to SM$ be the geodesic flow. Consider $\alpha\in \{b, c\}$ and assume that 
for all $\theta\in Per(\phi^t)$ we have
 $$\chi^{+}(\theta,\xi)=\lim_{t\to+\infty}\dfrac{1}{t}\log\|d_{\theta}\phi^{t}(\xi)\|=\alpha,$$
for all $\xi\in E_{\theta}^{u}\backslash\{0\}$. Then $K_{M}=-\alpha^{2}$.
\end{teo}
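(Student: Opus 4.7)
The strategy is to use the matrix Riccati equation along periodic geodesics together with the pinching hypothesis to force the full curvature tensor to be $-\alpha^2 I$ on each periodic orbit, and then propagate by density of periodic orbits. Along a unit-speed periodic geodesic $\gamma$ of period $\tau$ representing $\theta\in Per(\phi^t)$, let $U(t)$ denote the unstable Riccati operator on the normal space $\dot\gamma(t)^\perp$, so that every unstable Jacobi field $J$ along $\gamma$ satisfies $J'(t)=U(t)J(t)$, and
$$U' + U^2 + R_{\dot\gamma} = 0, \qquad R_{\dot\gamma}(v) := R(v,\dot\gamma)\dot\gamma.$$
The pinching $-c^2\leq K_M\leq -b^2$ translates to $-c^2 I\leq R_{\dot\gamma}\leq -b^2 I$ on $\dot\gamma^\perp$, and a standard matrix-Riccati comparison with the constant scalar solutions $u\equiv b,c$ yields $bI\leq U(t)\leq cI$ for every $t$.

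Identifying $E^u_\theta$ with $\dot\gamma(0)^\perp$ via the Jacobi-field correspondence, the sum of the unstable Lyapunov exponents along $\theta$ equals the average trace of the Riccati operator,
$$\sum_{i=1}^{m-1}\chi_i^+(\theta) = \frac{1}{\tau}\int_0^\tau \mathrm{tr}\,U(s)\,ds,$$
with $m=\dim M$. The hypothesis forces the left side to equal $(m-1)\alpha$. If $\alpha=c$, the pointwise bound $\mathrm{tr}\,U(s)\leq (m-1)c$ together with the equality of $\tau$-averages forces $\mathrm{tr}\,U(s)\equiv (m-1)c$, which, combined with $U\leq cI$, forces $U(s)\equiv cI$ on the entire orbit. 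The case $\alpha=b$ is symmetric and gives $U\equiv bI$. Substituting $U\equiv\alpha I$ (so $U'\equiv 0$) into the Riccati equation yields $R_{\dot\gamma(t)}\equiv -\alpha^2 I$ on $\dot\gamma(t)^\perp$ at every point of the orbit; equivalently, every sectional curvature of a 2-plane containing $\dot\gamma(t)$ equals $-\alpha^2$.

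To conclude, the pinching $-c^2\leq K_M\leq -b^2<0$ combined with finite volume implies the geodesic flow is uniformly Anosov on $SM$, and Poincar\'e recurrence together with the Anosov closing lemma then yields density of periodic orbits in $SM$. By continuity of the sectional curvature on the Grassmannian 2-plane bundle, the identity $K(v,w)=-\alpha^2$ extends from the dense set of orthonormal pairs arising from periodic orbits to all of $TM$, giving $K_M\equiv -\alpha^2$. The main obstacle I anticipate is justifying this density in the non-compact, finite-volume setting, where one must ensure that the Anosov closing lemma applies uniformly even into the cusps of $M$; the pinching hypothesis, used here for the second time, is precisely what provides the uniform hyperbolicity needed for shadowing on all of $SM$.
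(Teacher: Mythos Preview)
Your argument is correct and follows the same overall architecture as the paper's proof: relate the sum of unstable Lyapunov exponents along a periodic orbit to the period-average of $\mathrm{tr}\,U$, use the pinching bounds on $U$ together with equality of the average to force $U$ to be constant, read off the curvature from the Riccati equation, and conclude by density of periodic orbits in the finite-volume setting (which the paper isolates as a separate result, Theorem~\ref{teo2.7}).

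There is one noteworthy technical difference. For the case $\alpha=c$ the paper does \emph{not} invoke the pointwise upper bound $U\leq cI$; instead it passes through Cauchy--Schwarz and the inequality $(\mathrm{tr}\,U)^2\leq (n-1)\,\mathrm{tr}(U^2)$, integrates the traced Riccati equation to replace $\mathrm{tr}(U^2)$ by $-(n-1)\mathrm{Ric}$, and then uses $\mathrm{Ric}\geq -c^2$ to close the chain of inequalities. Your route --- using $bI\leq U\leq cI$ symmetrically in the two cases and then reading off $R_{\dot\gamma}\equiv -\alpha^2 I$ directly --- is shorter and yields the full normal curvature operator (not just the Ricci curvature) along each periodic geodesic in one stroke. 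Both approaches rely on the same comparison facts about the unstable Riccati solution, recorded in the paper as \eqref{EQ-Green} and \eqref{ec4.8}, so the ingredients are the same; your packaging is simply more uniform across the two cases.
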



\noindent The Theorem \ref{thm4.5} gives us the proof of a weak version of conjecture \ref{conj.1} in the case of pinched negative curvature when the Lyapunov exponents are the minimum or maximum possible. The conjecture remains open for the general case, even on compact manifolds. \\
\ \\
\noindent \textbf{Structure of the Paper:}  In section 2 we provide the tools to prove the main results. In section 3, we proved Theorem \ref{teo4.14} and later we will use it in the proof of Theorem \ref{New Theorem} and Theorem \ref{teo4.6}. We finish section 3 with the proof of Theorem \ref{Teo_New_1} using surface techniques without focal points. Finally, section 4 will be dedicated to the proof of Theorem \ref{thm4.5}.

\section{Preliminaries and Notation}
Throughout this paper, $M$ will denote a complete Riemannian manifold without
boundary of dimension $n\geq 2$, $TM$ is the tangent bundle, $SM$ its unit tangent bundle, and $\mathcal{L}$ will be its Liouville measure.

\subsection{Geodesic Flow}
For each $t\in \mathbb{R}$ and $\theta=(p,v)\in SM$ consider the family of diffeomorphism  $\phi^t\colon SM \to SM$ defined by 
$$\phi^t(\theta)=(\gamma_{\theta}(t),\gamma_{\theta}'(t)),$$
where $\gamma_{\theta}(t)$ is the unique geodesic with initial conditions $\gamma_{\theta}(0)=p$ and $\gamma_{\theta}'(0)=v$
This family is called the  \emph{geodesic flow} defined over $SM$.\\ 
The spray vector field $G$ is the vector field derivative of $\phi^{t}$, that is, 
$G(\theta):=\frac{d}{dt}\phi^{t}(\theta)\Big|_{t=0}.$

Given $\theta=(x,v)\in SM$, we identify $T_{\theta}TM $ with $T_xM\oplus T_xM$ using the identification 
$\xi \to (d\pi(\xi), \mathcal{K}_{\theta}(\xi))$, where $\pi\colon TM\to M$ denotes the canonical projection and $\mathcal{K}\colon TTM \to TM$ the connection map defined via the Levi-Civita connection.
The last identification allows us to define \emph{Sasaki's metric} on $TM$ as 
$$ \langle \xi,\eta\rangle_{\theta} = \langle D_{\theta}\pi(\xi), D_{\theta}\pi(\eta)  \rangle  +  \langle \mathcal{K}_{\theta}(\xi),    \mathcal{K}_{\theta}(\eta)   \rangle. $$

\noindent The geodesic flow  $\phi^t$ is \emph{Anosov} if  the tangent bundle of $SM$, $T(SM)$,  has a splitting $T(SM) = E^s \oplus \langle G \rangle \oplus E^u $ such that 
\begin{eqnarray*}
	D(\phi^t_{_{M}})_{\theta} (E^s(\theta)) &=& E^s(\phi_{_{M}}^t(\theta)),\\
	D(\phi^t_{_{M}})_{\theta} (E^u(\theta)) &=& E^u(\phi_{_{M}}^t(\theta)),\\
	||D(\phi^t_{_{M}})_{\theta}\big{|}_{E^s}|| &\leq& C \lambda^{t},\\
	||D(\phi^{-t}_{_{M}})_{\theta}\big{|}_{E^u}|| &\leq& C \lambda^{t},
\end{eqnarray*}
for all $t\geq 0$ with $ C > 0$ and $0 < \lambda <1$, where $G$ is the geodesic vector field. In that definition, we are always using \emph{Sasaki's metric} of $SM$ (see \cite{Pa} for more details). \\
Natural examples of Anosov geodesic flows are produced by metrics of negative curvature on compact manifolds (cf. \cite{Anosov}) and metrics of pinched negative curvature\footnote{That means there are $b,c>0$ such that the sectional curvature satisfies $-c^2\leq K\leq -b^2$.} in the non-compact case (\cite{Kn}). However, other examples can be found without the last assumption on the curvature (see by example \cite{Ebe} and \cite{IR1}).

\subsection{No conjugate points and No focal Points}\label{sec-ncp}
Given  $\gamma$ we said that a vector field $J$ along a geodesic $\gamma$ is a Jacobi field if it satisfies the Jacobi equation 
$$J''(t)+R(\gamma'(t),J(t))\gamma'(t)=0,$$
where $R$ is the curvature tensor of $M$ and ``$'$" denotes the covariant derivative along $\gamma$. Jacobi's field plays an important role in the study of the dynamic of the geodesic flow, in fact, the Jacobi field can be used to get geometric properties when we have some dynamic properties of the geodesic flow (see \cite{Pa} and the introduction of \cite{IR} for more details).\\
Another important observation is based on the following fact: If $\xi\in T_{\theta}SM$, then $J_{\xi}(t)$ denotes the unique Jacobi field along $\gamma_{\theta}(t)$ such that $J_{\xi}(0)=D\pi_{\theta}(\xi)$ and $J'(\xi)(0)=\mathcal{K}_{\theta}(\xi)$. Moreover,

\begin{equation}\label{EQ-Dif_Jacobi}
D\phi^t_{\theta}(\xi)=(J_{\xi}(t), J'_{\xi}(t)).
\end{equation}
The last formula allows us to focus our attention on the study of the Jacobi field to understand the behavior of the geodesic flow. \\
\indent Two points $p,q\in M$ are $\textit{conjugate}$ if there is a geodesic $\gamma$ joining $p$ and $q$ and a non-zero Jacobi field along $\gamma$ that vanishes at $p$ and $q$. When neither two points in $M$ are conjugate, we say the manifold $M$ $\textit{has no conjugate points}$. Another important kind of manifold is the manifold without focal points, we say that a manifold $M$ $\textit{has no focal points}$, if for any unit speed geodesic $\gamma$ in $M$ and for any Jacobi field $J$ on $\gamma$ such that $J(0)=0$ and $J'(0)\neq 0$ we have $(\|J\|^{2})'(t)>0$, for any $t>0$. It is easy to see that if a manifold has no focal points, then it has no conjugate points.  

The more classical examples of manifolds without focal points and therefore without conjugate points are the manifolds of non-positive sectional curvature. It is possible to construct a manifold having positive curvature somewhere, and without conjugate points. There is some special connection between the Anosov property and no conjugate points property. In fact, in \cite{Klin} Klingenberg proved that compact manifold with Anosov geodesic flow has no conjugate points, in result was generalized Ma\~n\'e in the case of finite volume (cf. \cite{Ma}). Recently, the second author in a joining work with I. Melo proved the result for the case of infinite volume and under the assumption of sectional curvature bounded below (cf. \cite{IR2}).\\
In \cite{Ebe}, Eberlein obtains a general characterization of the Anosov condition for compact manifold without focal points, this characterization will be useful in the proof of Theorem \ref{Teo_New_1}.

\subsubsection{Green Subbundles}
For $\xi = (w_1, w_2) \in T_{\theta}SM$, where $w_1, w_2\in T_{p}M$ with $\langle w_i, v\rangle=0$, $i=1,2$,  we denote by $J_{\xi}(t)$ the unique Jacobi vector field along $\gamma_{\theta}$ such that $J_{\xi}(0) = w_1$ and $J_{\xi}'(0)=w_2$. For more details see \cite{Pa}.\\
To study the dynamic behavior of the geodesic flow, we usually look for two special subbundles of $TSM$, which are invariant for $D\phi^t$. The \emph{stable and unstable Green subbundles} are defined, respectively, as follows:
$$G^s_\theta= \{\xi \in T_{\theta}SM : \xi \,\, \text{is orthogonal to} \,\, G(\theta) \,\, \text{and}\,\, ||J_{\xi}(t)||\,\,\,\text{is bounded for}\,\,\ t \geq 0\}$$
\vspace{-0.1cm} 
and
\vspace{-0.4cm} 
$$G^u_\theta= \{\xi \in T_{\theta}SM : \xi \,\, \text{is orthogonal to} \,\, G(\theta)\,\  \text{and} ||J_{\xi}(t)||\,\,\, \text{is bounded for}\,\,\ t \leq 0\},$$
where $G(\theta)$ is the geodesic vector field.\\

\noindent If $M$ has dimension $n$ and has no conjugate points, then the
dimension of Green subbundles is $n-1$. Moreover, if $M$ has no focal points, then 
Green's subbundles depend continuously on $\theta\in SM$ (cf. \cite{Ebe}). In fact, for manifolds without focal points, the Anosov condition is characterized by the condition $G^s_{\theta}\cap G^u_{\theta}=\{0\}$ and clearly $G^*_{\theta}=E^{*}_{\theta}$, $*=s,u$.(see \cite{Ebe} for more details on Green subbundles). 
\subsubsection{Jacobi Tensor and Riccati Equation}

Let $\gamma_{\theta}$ be a geodesic and consider $V_{1}, V_{2},\ldots, V_{n}$ a system of parallel orthonormal vector fields along $\gamma_{\theta}$ with $V_{n}(t)=\gamma'_{\theta}(t)$. Any orthogonal vector field $Z(t)$ along $\gamma_{\theta}$ can be write as 
$$Z(t)=\sum_{i=1}^{n-1}y_{i}(t)V_{i}(t).$$

We can identified $Z(t)$ with the curve  $\alpha'(t)=(y'_{1}(t), y'_{2}(t),\dots , y'_{n-1}(t))$. Conversely, any curve in $\mathbb{R}^{n-1}$ can be identified with a perpendicular vector field on $\gamma_{\theta}(t)$.

For each $t\in \mathbb{R}$, consider the symmetric matrix $R(t)=(R_{i,j}(t))$ defined as  $$R_{i,j}(t)=\langle R(\gamma'_{\theta}(t),V_{i}(t))\gamma'_{\theta}(t), V_{j}(t))\rangle,$$ 
 $1\leq i,j\leq n-1$, where $R$ is the curvature tensor of $M$. Thus, we can consider the matrix \emph{Jacobi equation} associated to $R$ as 
\begin{eqnarray}\label{ec1.1}
Y''(t)+R(t)Y(t)=0.
\end{eqnarray}

If $Y(t)$ is a solution of (\ref{ec1.1}) then for each $x\in\mathbb{R}^{n-1}$, the curve $J(t)=Y(t)x$ corresponds to a Jacobi perpendicular vector field on $\gamma_{\theta}(t)$. In the following, we give a slight description of some special solution of the equation (\ref{ec1.1}). For $\theta\in SM$, $r\in \mathbb{R}$, we consider $Y_{\theta,r}(t)$ be the unique solution of (\ref{ec1.1}) satisfying $Y_{\theta,r}(0)=I$ and $Y_{\theta,r}(r)=0$. In the case of a manifold without conjugate points, in \cite{L}, Green proved that $\displaystyle\lim_{r\to -\infty} Y_{\theta,r}(t)$ exists for all $\theta\in SM$ (see also \cite{Ebe}, Sect. 2). Moreover, if we define: 
\begin{eqnarray*}\label{ec1.2}
Y_{\theta,u}(t):=\lim_{r\to -\infty} Y_{\theta,r}(t),
\end{eqnarray*}
we obtain a solution of Jacobi equation (\ref{ec1.1}) such that $\det Y_{\theta,u}(t)\neq 0$, which we call the \emph{unstable Jacobi Tensor}. Furthermore, in \cite{L} was prove that for all $t\in \mathbb{R}$
\begin{equation}\label{EQ-Unstable}
U_{\theta, u}(t):=U_{u}(\phi^{t}(\theta))=\dfrac{DY_{\theta,u}}{dt}(t)Y^{-1}_{\theta,u}(t)
\end{equation}
is well defined and it is a symmetric solution of the matrix Riccati equation:
\begin{eqnarray}\label{ec1.3}
U'(t)+U^{2}(t)+R(t)=0.
\end{eqnarray}
Analogously, taking the limit when $r\to +\infty$, we have defined $U_{\theta, s}(\theta)$, that also satisfies the Riccati equation (\ref{ec1.3}). When the curvature is bounded below by $-c^2$, then (see \cite{L} and \cite{Ebe})
\begin{equation}\label{EQ-Green}
\max\{\|U_{\theta, s}(t)\|, \|U_{\theta, u}(t)\|\}\leq c.
\end{equation}

When the sectional curvature is negative bounded above, that is, there is a positive constant $b$ such that $K\leq -b^2$, then (cf. \cite{Ebe} and \cite{Kn})
\begin{equation}\label{ec4.8}
\langle U_{\theta,u}(t)x,x\rangle\geq b\langle x,x\rangle
\end{equation}
and
\begin{equation*}
\langle U_{\theta,s}(t)x,x\rangle\leq -b\langle x,x\rangle,
\end{equation*}
for all $t\in \mathbb{R}$.
\subsection{Lyapunov Exponents}
Given $\theta\in SM$ and $\xi\in T_{\theta}SM$, the \emph{Lyapunov exponent} associated to $(\theta, \xi)$ is defined as 
$$\chi(\theta, \xi):=\lim_{t\to \infty}\frac{1}{t}\log ||D\phi^t_{\theta}(\xi)||,$$
whenever the limit exists.\\
Oseledet's Theorem guarantees the existence of Lyapunov exponent for geodesic flows on a compact manifold, that is,  there exists a set $\Lambda$ of full $\mathcal{L}$-measure, and a good filtration of subspaces of $T_{\theta}SM$ (see \cite{Osel}) where the Lyapunov exponents always exist.  
It is not difficult to proof that, if $M$ has no conjugate points, then for $\theta\in \Lambda$  $$\chi(\theta, \xi)\leq 0, \,\, \xi\in G^{s}_{\theta}$$
$$\chi(\theta, \eta)\geq 0, \,\, \eta\in G^{u}_{\theta}.$$
Thus, wherever we use $\chi^{+}(\theta, \eta)$, we look always for $\eta\in G^{u}_{\theta}$.
Moreover, we have the following characterization for manifold without conjugate points (\cite{MF} for compact case and \cite{IR2} for non-compact case):

\begin{equation}\label{EQ-Form-Lyp-Exp}
\lim_{t\to \infty}\frac{1}{t}\log |\text{det}|D\phi^t_{\theta}|_{G^u_{\theta}}|=\lim_{t\to \infty}\frac{1}{t}\int_{0}^{t}\text{tr}\,U_{\theta,u}(s)ds,
\end{equation}
where $U_{\theta,u}(s)$ is given by (\ref{EQ-Unstable}).\\
In the particular case of dimension two, the last formula has a very easy interpretation: Let $\eta\in G^{u}_{\theta}$ and $J_{\eta}(t)$ the Jacobi field associated to $\eta$ (as Section \ref{sec-ncp}). In dimension two, $J_{\eta}(t)=f_{\eta}(t)E(t)$, where $E(t)$ is a parallel orthonormal vector field along $\gamma_{\theta}(t)$ and $f_{n}(t)$ satisfies the uni-dimensional Jacobi equation 
$$J''(t)+K(\gamma_{\theta}(t))J(t)=0,$$
where $K(\gamma_{\theta}(t))$ is the sectional curvature along $\gamma_{\theta}(t)$.\\
Also $u_{\theta}(t)=\dfrac{f'_{\eta}(t)}{f_{\eta}(t)}$ satisfies the uni-dimensional Riccati equation
\begin{equation}\label{EQ-Uni-Riccati}
u'(t)+u^{2}(t)+K(\gamma_{\theta}(t))=0.
\end{equation}
From (\ref{EQ-Dif_Jacobi}), (\ref{EQ-Green}), and (\ref{EQ-Form-Lyp-Exp}) we obtain 
$$\chi^{+}(\theta, \eta)=\lim_{t\to \infty}\frac{1}{t}\log ||J_{\eta}(t)||=\lim_{t\to \infty}\frac{1}{t}\int_{0}^{t}u_{\theta}(s)ds.$$
Since $u_{\theta}(s)$ is bounded (see (\ref{EQ-Green})) and $u_{\theta}(s)$ satisfies (\ref{EQ-Uni-Riccati}), then using Cauchy-Schwarz inequality is easy to see that 
\begin{equation*}\label{Lyapunov and Curvature}
\chi^{+}(\theta, \eta)\leq \sqrt{-\limsup_{t\to \infty}\frac{1}{t}\int_{0}^{t}K(\gamma_{\theta}(t))ds}.
\end{equation*}
Finally, observe that if $\theta$ is periodic point of period$\tau$, then we have 

\begin{equation}\label{Lyapunov and Curvature}
\chi^{+}(\theta, \eta)\leq \sqrt{-\frac{1}{\tau}\int_{0}^{\tau}K(\gamma_{\theta}(t))ds}.
\end{equation}

\section{Rigidity in dimension 2.}


In this section, assume that $(M,g)$ is a compact surface with a Riemannian metric $g$.



\noindent In order to prove the Theorem \ref{teo4.6}, we will use two important results. The first result is about the approximation of Lyapunov exponents of an invariant measure by  Lyapunov exponents of measures concentrated on periodic orbits (cf. \cite{Kal}) and the second result is about the rigidity of smooth volume being the MME for three-dimensional Anosov flow (cf. \cite{SL}).


For each periodic point $\theta$, let $\mu_{\theta}$ be the unique $\phi^{t}$-invariant probability measure supported on the orbit of $\theta$, which may be obtained as the normalized push-forward of Lebesgue measure on $\mathbb{R}$ by the map $t\to \phi^{t}(\theta)$.
\begin{teo}\label{teo4.7}\emph{\textbf{[Kalinin]}}
 Let $\phi^t$ be a hyperbolic flow on a $n$-dimensional manifold $N$ and $\mathcal{A}$ a H$\ddot{o}$lder continuous cocycle over $\phi^{t}$. Let $\mu$ be an ergodic $\phi^{t}$-invariant measure and let $\lambda_{1}\leq\lambda_{2}\leq\cdots\leq\lambda_{n}$ be the Lyapunov exponents of $\mathcal{A}$ with respect to $\mu$, counted with the multiplicity. Then for every $\varepsilon>0$, there is a periodic point $\theta\in N$ of $\phi^{t}$ such that the Lyapunov exponents $\lambda_{1}^{\theta}\leq\lambda_{2}^{\theta}\leq\cdots\leq\lambda_{n}^{\theta}$ of $\mathcal{A}$ with respect to $\mu_{\theta}$ satisfy $$|\lambda_{i}-\lambda_{i}^{\theta}|<\varepsilon,$$
for each $1\leq i\leq n$.
\end{teo}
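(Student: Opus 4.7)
The plan is to combine the multiplicative ergodic theorem with the Anosov closing lemma and H\"older continuity of the cocycle, following the idea that long orbit segments on which the Birkhoff/Oseledets averages have stabilized can be closed up into genuine periodic orbits without substantially altering the cocycle product.

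First I would fix $\varepsilon>0$ and, by Oseledets' theorem applied to $\mathcal{A}$ and $\mu$, choose a Lyapunov regular point $x$ in the support of $\mu$ whose trajectory realizes all exponents $\lambda_i$. Using Birkhoff's theorem and the sub-additive ergodic theorem for exterior powers $\Lambda^k \mathcal{A}$, I would pick a time $T$ so large that, for every $1\le k\le n$, the quantity $\tfrac{1}{T}\log\|\Lambda^{k}\mathcal{A}^{T}(x)\|$ is within $\varepsilon/2$ of $\lambda_{n-k+1}+\cdots+\lambda_{n}$, and simultaneously so that the orbit segment from $x$ to $\phi^{T}(x)$ comes back to any prescribed small neighborhood of $x$ (Poincar\'e recurrence in the support of $\mu$). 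The Anosov closing lemma then produces a periodic point $\theta$ of period $\tau$ close to $T$ whose orbit $\varepsilon'$-shadows the segment $\{\phi^{t}(x):0\le t\le T\}$ with an exponentially small error $\varepsilon'$ that can be made as small as desired by shrinking the initial window.

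Next I would transfer this geometric proximity to a dynamical estimate on the cocycle. Since $\mathcal{A}$ is H\"older continuous and the shadowing between $\phi^{t}(x)$ and $\phi^{t}(\theta)$ decays exponentially away from the endpoints of the segment, a standard telescoping/Liv\v sic-style estimate gives
\[
\bigl\|\Lambda^{k}\mathcal{A}^{\tau}(\theta)\cdot\Lambda^{k}\mathcal{A}^{T}(x)^{-1}\bigr\|\le C,
\]
for a constant $C$ independent of $T$, for each $k$. Dividing the logarithm by $\tau$ and letting $T$ be large, this shows $\tfrac{1}{\tau}\log\|\Lambda^{k}\mathcal{A}^{\tau}(\theta)\|$ differs from the corresponding Oseledets sum by less than $\varepsilon$. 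Because $\theta$ is periodic, the Lyapunov exponents $\lambda_{1}^{\theta}\le\cdots\le\lambda_{n}^{\theta}$ of $\mathcal{A}$ at $\theta$ are exactly $\tfrac{1}{\tau}\log$ of the moduli of the eigenvalues of $\mathcal{A}^{\tau}(\theta)$, and $\tfrac{1}{\tau}\log\|\Lambda^{k}\mathcal{A}^{\tau}(\theta)\|$ equals the sum of the top $k$ such exponents. Solving the triangular linear system relating partial sums to individual exponents yields $|\lambda_{i}-\lambda_{i}^{\theta}|<\varepsilon$ for every $i$.

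The step I expect to be the main obstacle is the passage from the top exponent (where the H\"older closing argument is essentially classical) to all exponents simultaneously. The estimate on $\|\Lambda^{k}\mathcal{A}^{\tau}(\theta)\|$ controls only the sum of the top $k$ Lyapunov exponents of the periodic orbit, not each one separately, so one must ensure the norms of the exterior power cocycles are actually close to these sums on both sides; this requires not just the inequality $\tfrac{1}{t}\log\|\Lambda^{k}\mathcal{A}^{t}\|\ge \sum_{i=n-k+1}^{n}\lambda_{i}$ but also the matching reverse inequality with the same error, which on the $\mu$-side comes from Oseledets' theorem and on the periodic-orbit side from a careful comparison of the operator norm of $\Lambda^{k}$ applied to a matrix with its singular values. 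Once the comparison is made uniform in $k$, inverting the partial-sum relation gives the pointwise control on each $\lambda_i^\theta$ and completes the proof.
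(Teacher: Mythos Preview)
The paper does not prove this theorem: it is quoted from Kalinin's article \cite{Kal} and used as a black box in the proof of Theorem~\ref{teo4.14}. There is therefore no proof in the paper to compare your sketch against.

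That said, your outline follows the architecture of Kalinin's original argument, and you have correctly located the real difficulty. The step that is not resolved in your sketch is the passage, on the periodic-orbit side, from $\tfrac{1}{\tau}\log\|\Lambda^{k}\mathcal{A}^{\tau}(\theta)\|$ to the partial sum $\lambda_{n-k+1}^{\theta}+\cdots+\lambda_{n}^{\theta}$. You assert these are equal, but $\|\Lambda^{k}\mathcal{A}^{\tau}(\theta)\|$ is the product of the top $k$ \emph{singular values} of $\mathcal{A}^{\tau}(\theta)$, whereas the Lyapunov exponents of the periodic orbit are $\tfrac{1}{\tau}$ times the logarithms of the moduli of its \emph{eigenvalues}; for a single non-normal matrix these can differ by an arbitrary amount, so controlling one period does not suffice. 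Your closing remark that what is needed is ``a careful comparison of the operator norm of $\Lambda^{k}$ applied to a matrix with its singular values'' misidentifies the issue, since that comparison is an identity; the gap is singular values versus eigenvalues. Kalinin bridges it by a more delicate argument that goes beyond a single closing: one must work with arbitrarily long returns, use subadditivity to get the upper bound on the periodic partial sums, and extract a matching lower bound from the Oseledets splitting at the regular point together with the exponential shadowing estimate. That extra layer is exactly the content of \cite{Kal} beyond the classical scalar Liv\v{s}ic/closing argument, and it is missing from your proposal.
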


We use Kalinin's result in the special case where $N=SM$, $\phi^t$ is the geodesic flow and the cocycle $\mathcal{A}$ is the cocycle derivative of the geodesic flow $\phi^{t}$, this means:
\begin{eqnarray*}
\mathcal{A}:N \times \mathbb{R}&\to & GL(n,\mathbb{R})\\
(x,t)&\mapsto &\Lambda(x,t)=d_{x}\phi^{t},
\end{eqnarray*}
and $n=2\dim(M)-1$.

\noindent To announce the second result we need, we start with the definition of \textit{Algebraic flows}, which can be found in \cite{SL} and \cite{t}.

\begin{defi}\label{defi4.8}
An Anosov flow $\Phi:N\to N$ on a $3$-dimensional compact manifold $N$ is \emph{algebraic} if it is finitely covered by
\begin{itemize}
\item[\emph{(1)}] a suspension of a hyperbolic automorphism of the $2$-torus $\mathbb{T}^{2}=\mathbb{R}^{2}/\mathbb{Z}^{2}$;
\item[\emph{(2)}] or the geodesic flow on some closed Riemannian surface of constant negative curvature.
\end{itemize}

\end{defi} 

The following result can be found in \cite{SL}, it gives us a characterization of the conjugacy of a flow to an algebraic flow when we have that the volume measure is a measure of maximal entropy.
\begin{teo}\label{teo4.9}\emph{[\textbf{SLVY}]}
Let $k\geq 5$ be some integer and let $\Phi$ be a $C^{k}$ Anosov flow on a compact connected $3$-manifold $N$ such that the smooth volume measure $\mu$ is invariant. Then $h_{top}(\Phi)=h_{\mu}(\Phi)$ if and only if $\Phi$ is $C^{k-\epsilon}$-conjugate to an algebraic flow, for $\epsilon>0$ arbitrarily small.
\end{teo}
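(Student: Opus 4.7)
The plan is to prove the easy direction first and then reduce the converse to a rigidity statement about the Bowen--Margulis measure. For the direction assuming $C^{k-\epsilon}$-conjugacy to an algebraic flow, one checks the two model cases in Definition \ref{defi4.8} directly: for a suspension of a hyperbolic toral automorphism the normalized Haar measure is the unique MME by classical computation, while for the geodesic flow of a constant negatively curved surface the Liouville measure coincides with the Bowen--Margulis measure. Since topological entropy is preserved under $C^{0}$ conjugacy and a $C^{1}$ conjugacy sends a smooth invariant volume to a smooth invariant volume (up to a multiplicative constant), the equality $h_{top}(\Phi)=h_{\mu}(\Phi)$ transfers from the algebraic model back to $\Phi$.

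For the harder converse, I would first recall that any conservative $C^{2}$ Anosov flow is ergodic and topologically mixing, so by the Bowen--Ruelle theory there is a unique MME, the Bowen--Margulis measure $m_{BM}$. The hypothesis $h_{\mu}=h_{top}$ then forces $\mu = m_{BM}$. On the other hand, conservativity makes $\mu$ coincide with its own SRB measure, so the Pesin entropy formula gives $h_{\mu}(\Phi)=\int\log|\det d\Phi^{1}|_{E^{u}}|\,d\mu$. Combining this with the Margulis--Ruelle inequality for \emph{any} other invariant measure and invoking a Livsic-type argument, one deduces that the unstable log-Jacobian is H\"older-cohomologous to the constant $h_{top}$: there exists a H\"older $\psi$ with $\log J^{u}(x)=h_{top}+\psi(\Phi^{1}x)-\psi(x)$, and a symmetric statement holds for $J^{s}$.

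The crucial three-dimensional input is that $E^{u}$ and $E^{s}$ are line bundles, so both Jacobians are scalar cocycles. Using the cohomological equations together with non-stationary normal forms along the one-dimensional stable and unstable manifolds, one can bootstrap the regularity of the weak-stable and weak-unstable foliations. Iteratively applying Journ\'e's regularity lemma across the transverse pair of one-dimensional foliations and the De La Llave--Marco--Moriy\'on rigidity results, the \emph{a priori} H\"older regularity of the weak foliations is upgraded to $C^{k-\epsilon}$. Once both weak foliations are this smooth, the Ghys classification of three-dimensional Anosov flows with smooth weak foliations applies and produces a $C^{k-\epsilon}$ conjugacy, up to a finite cover, to either a suspension of a hyperbolic toral automorphism or to the geodesic flow of a closed hyperbolic surface, which is exactly an algebraic flow.

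The main obstacle is the regularity bootstrap: converting the Livsic-type cohomological equation for the scalar Jacobians into high regularity of the weak foliations is delicate, because each invocation of Journ\'e's lemma costs a small fixed loss of differentiability, which is why the hypothesis $k\geq 5$ is essential for the final classification step to have enough room to operate. A secondary issue is verifying at the outset that the conservative Anosov flow is topologically mixing so that the uniqueness of the MME applies, but this follows from the standard ergodicity results for conservative hyperbolic flows together with the spectral decomposition for Anosov flows in dimension three.
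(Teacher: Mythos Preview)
This theorem is not proved in the present paper at all: it is quoted verbatim from \cite{SL} (De~Simoi--Leguil--Vinhage--Yang) and used as a black box in the proofs of Theorems~\ref{New Theorem} and~\ref{teo4.6}. There is therefore no ``paper's own proof'' to compare your proposal against; the authors simply cite the result and move on.

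That said, your sketch is a reasonable summary of the strategy actually carried out in \cite{SL}, with one technical slip worth flagging. You assert that any conservative $C^{2}$ Anosov flow is topologically mixing, and use this to invoke uniqueness of the MME. This is false as stated: the suspension of a hyperbolic toral automorphism by a constant roof function is a conservative Anosov flow that is \emph{not} topologically mixing (it has a global cross-section with constant return time). What you actually need---and what does hold---is topological \emph{transitivity}, which follows because volume preservation forces the non-wandering set to be all of $N$, and a connected Anosov flow with full non-wandering set is transitive. Bowen's uniqueness of the equilibrium state for H\"older potentials over a transitive Anosov flow then gives uniqueness of the MME without any mixing hypothesis. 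The rest of your outline (Liv\v{s}ic reduction of the scalar unstable Jacobian to a constant, normal-form/Journ\'e bootstrap of the weak foliations to $C^{k-\epsilon}$, and the Ghys classification) matches the architecture of \cite{SL}; the regularity bookkeeping you allude to is indeed where the hypothesis $k\geq 5$ enters.
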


The Theorem \ref{teo4.9} is related to the following conjecture:

\begin{conj}\label{conj411}
\emph{[\textbf{Katok Entropy Conjecture}]} Let $(M,g)$ be a connected Riemannian manifold of negative curvature and $\psi$ be the corresponding geodesic flow. Then $h_{top}(\psi)=h_{\mathcal{L}}(\psi)$ if and only if $(M,g)$ is a locally symmetric space, where  $\mathcal{L}$ is the Liouville measure in $SM$.
\end{conj}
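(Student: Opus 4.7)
The final statement is the Katok Entropy Conjecture, a theorem in dimension two due to Katok and open in dimension $\geq 3$. My plan is therefore a complete strategy for surfaces---the case actually needed in this paper for Theorem~\ref{teo4.6}---together with a remark on the higher-dimensional obstruction.

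The $(\Leftarrow)$ direction is essentially structural: if $(M,g)$ is locally symmetric of negative curvature, then its geodesic flow is algebraic in the sense of Definition~\ref{defi4.8}; the Liouville measure descends from Haar and, on such homogeneous flows, coincides with the Bowen--Margulis measure of maximal entropy, so $h_{top}(\psi)=h_\mathcal{L}(\psi)$.

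For the $(\Rightarrow)$ direction in dimension two, assume $h_{top}(\psi)=h_\mathcal{L}(\psi)$. By Pesin's entropy formula applied to the smooth invariant measure $\mathcal{L}$, $h_\mathcal{L}(\psi)=\int_{SM}\chi^+\,d\mathcal{L}$. In dimension two $\chi^+(\theta)$ is the Birkhoff average of the scalar unstable Riccati solution $u_\theta$ from (\ref{EQ-Uni-Riccati}), so the ergodic theorem converts this into $h_\mathcal{L}(\psi)=\int u\,d\mathcal{L}$. Integrating the Riccati identity $u'+u^2+K=0$ against $\mathcal{L}$ and using $\psi$-invariance to kill $\int u'\,d\mathcal{L}$, I get $\int u^2\,d\mathcal{L}=\int(-K\circ\pi)\,d\mathcal{L}$. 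Cauchy--Schwarz then yields
\[
h_\mathcal{L}(\psi)^2 \;=\; \Bigl(\int u\,d\mathcal{L}\Bigr)^2 \;\leq\; \int u^2\,d\mathcal{L} \;=\; \int(-K\circ\pi)\,d\mathcal{L}.
\]
Coupling this with a matching Katok-type lower bound $h_{top}(\psi)^2 \geq \int(-K\circ\pi)\,d\mathcal{L}$ for surfaces (via Manning's volume-entropy identity together with a Jensen-type estimate), the hypothesis $h_{top}=h_\mathcal{L}$ forces equality throughout Cauchy--Schwarz, so $u$ is $\mathcal{L}$-a.e.\ constant on $SM$. Continuity of Green subbundles for compact surfaces without focal points upgrades this to $u$ constant globally; plugging $u'\equiv 0$ into (\ref{EQ-Uni-Riccati}) then gives $K \equiv -u^2$ constant on $M$, which is the locally symmetric condition in dimension two.

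The main obstacle is the simultaneous sharpening of the upper and lower bounds for $h_\mathcal{L}$ and $h_{top}$: one needs the matching lower bound in exactly the form that saturates together with the Cauchy--Schwarz upper bound, which is a subtle Katok-style computation even in dimension two. Beyond $\dim M = 2$ the whole scheme breaks down---the scalar Riccati equation (\ref{EQ-Uni-Riccati}) becomes the matrix Riccati equation (\ref{ec1.3}), Cauchy--Schwarz is replaced by a tracial inequality using (\ref{EQ-Form-Lyp-Exp}), and equality in trace only forces $U_{\theta,u}$ to be a scalar multiple of the identity. Upgrading that to rigidity of the full curvature tensor $R(t)$, let alone to a locally symmetric structure, requires genuinely new ideas (invariant conformal or Carnot--Carath\'eodory structures on the stable/unstable foliations, rank-rigidity or harmonic-measure techniques), which is precisely why the conjecture remains open in dimension $\geq 3$.
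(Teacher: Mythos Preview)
The statement you address is labeled and presented in the paper as a \emph{conjecture}, not as a theorem; the paper offers no proof and does not claim one. It is stated only to give context for Theorem~\ref{teo4.9} (the De~Simoi--Leguil--Vinhage--Yang result). There is therefore no ``paper's own proof'' against which to compare your proposal. Your parenthetical that the two-dimensional case is needed for Theorem~\ref{teo4.6} is also not quite right: the proof of Theorem~\ref{teo4.6} goes through Theorem~\ref{New Theorem} and Theorem~\ref{teo4.9} and the smooth-conjugacy classification, not through Conjecture~\ref{conj411}; the two-dimensional Katok rigidity (cited as \cite[Corollary~3.3]{Katok82}) is invoked only in the proof of Theorem~\ref{Teo_New_1}.

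On the substance of your two-dimensional sketch: the $(\Leftarrow)$ direction and the Pesin--Riccati--Cauchy--Schwarz chain giving $h_{\mathcal{L}}(\psi)^2\leq\int_{SM}(-K\circ\pi)\,d\mathcal{L}$ are correct and do reproduce Katok's upper bound. The genuine gap is the matching lower bound $h_{top}(\psi)^2\geq\int_{SM}(-K\circ\pi)\,d\mathcal{L}$. Your attribution to ``Manning's volume-entropy identity together with a Jensen-type estimate'' does not establish it. By Gauss--Bonnet the right-hand side equals $2\pi|\chi(M)|/\mathrm{Area}(M)$, and the inequality $h_{top}(\psi)^2\cdot\mathrm{Area}(M)\geq 2\pi|\chi(M)|$ is Katok's \emph{conformal comparison}: uniformize $g$ by a constant-curvature metric $g_0$ of the same area and prove $h_{top}(g)\geq h_{top}(g_0)$ via a direct length/area estimate specific to conformal changes of metric on a surface. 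That argument is neither a consequence of Manning's theorem nor a Jensen inequality, and it is precisely the nontrivial half of the two-dimensional result; you have black-boxed it under an incorrect label.
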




Finally, we mention the following result due to Plante (see \cite{Pl}). Basically, the result gives us two alternatives for the strong stable and strong unstable manifold:  they are or not dense on the manifold when the Anosov flow $\psi^t$ is transitive or equivalently the non-wondering set $\Omega(\psi^t )=M$.

\begin{teo}\label{teo4.11}
\emph{[\textbf{Plante}]} Let $\psi^{t}:M\to M$ be an Anosov flow such that $\Omega(\psi^t)=M$. Then there are two possibilities:

\begin{enumerate}
\item[\emph{(a)}] Each strong stable and strong unstable manifold is dense in $M$, or
\item[\emph{(b)}] $\psi^{t}$ is a suspension (modulo time scale change by a constant factor) of an Anosov diffeomorphism of a compact $C^{1}$ submanifold of codimension one in $M$.
\end{enumerate} 
\end{teo}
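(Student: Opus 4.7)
The plan is to establish the dichotomy by analyzing minimal sets of the strong stable foliation $W^{ss}$ (a symmetric argument handles $W^{uu}$). Either $W^{ss}$ is minimal, in which case every strong stable leaf is dense and alternative (a) follows, or there exists a proper closed $W^{ss}$-saturated minimal set $\Lambda$ which must turn out to be a compact codimension-one submanifold transverse to the flow — yielding the suspension structure in (b).

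Assuming (a) fails, I would fix $x \in M$ with $\overline{W^{ss}(x)} \neq M$ and apply Zorn's lemma to the family of nonempty closed $W^{ss}$-saturated subsets of $\overline{W^{ss}(x)}$ to obtain a $W^{ss}$-minimal set $\Lambda$. Because $\psi^{t}$ permutes $W^{ss}$-leaves, each translate $\psi^{t}(\Lambda)$ is again $W^{ss}$-minimal. If $\psi^{t}(\Lambda) = \Lambda$ for every $t$, then $\Lambda$ is a proper closed flow-invariant set containing entire strong stable leaves, which contradicts the topological transitivity forced by $\Omega(\psi^{t}) = M$ together with the Anosov spectral decomposition. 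Hence the translates $\psi^{t}(\Lambda)$ are pairwise disjoint from $\Lambda$ for small $t \neq 0$, so $\Lambda$ is locally transverse to the flow direction.

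Using the local product structure $W^{ss} \times W^{uu} \times \mathbb{R}$ of the Anosov foliations, $\Lambda$ meets each small product box in $(W^{ss}\text{-plaque}) \times C$ for some closed transverse set $C$. To upgrade $C$ from a raw closed minimal set to a topological manifold I would deploy Plante's transverse-measure technique: averaging flow-pushforwards of an atomic measure on a local transversal produces a $\psi^{t}$-invariant, $W^{ss}$-holonomy-invariant transverse measure whose support is $\Lambda$, and whose finiteness on compact transversals combined with holonomy-invariance forces discreteness and uniform density of $C$. This endows $\Lambda$ with the structure of a compact codimension-one submanifold transverse to the flow; the first-return map is then an Anosov diffeomorphism inheriting the $E^{s} \oplus E^{u}$ splitting, and $\psi^{t}$ is its suspension modulo a constant time rescaling to make the return time constant.

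The main obstacle is precisely the step that passes from a bare $W^{ss}$-minimal set to an embedded codimension-one submanifold — that is, ruling out Cantor-like or otherwise pathological transverse structure. This is exactly where Plante's construction of a transverse invariant measure becomes indispensable; minimality by itself is compatible with exotic embeddings, and without the measure-theoretic input I do not see how to extract the manifold structure needed for the suspension. The remaining ingredients — transitivity of transitive Anosov flows, local product structure, and the first-return construction — are relatively standard hyperbolic-dynamics machinery.
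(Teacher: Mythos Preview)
The paper does not contain a proof of this statement. Theorem~\ref{teo4.11} is stated as a result due to Plante and attributed to reference~\cite{Pl}; the paper merely invokes it as a black box in the proof of Theorem~\ref{teo4.6} to rule out the suspension alternative for geodesic flows. There is therefore no ``paper's own proof'' to compare your proposal against.

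As for the proposal itself: the broad outline --- find a $W^{ss}$-minimal set, show it is either all of $M$ or a transverse codimension-one submanifold, then build the suspension --- is indeed the skeleton of Plante's argument. Two remarks, though. First, your passage from ``$\psi^{t}(\Lambda)=\Lambda$ for all $t$ fails'' to ``the translates are pairwise disjoint for small $t\neq 0$'' skips a step: what you actually get is that $\{t:\psi^{t}(\Lambda)=\Lambda\}$ is a proper closed subgroup of $\mathbb{R}$, hence discrete, and by minimality two translates either coincide or are disjoint; the union $\bigcup_{t}\psi^{t}(\Lambda)$ is then flow-invariant and, once shown closed, equals $M$ by transitivity. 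Second, the ``transverse-measure technique'' you invoke to upgrade $\Lambda$ to a submanifold is not how Plante's 1972 paper proceeds; his argument there is more direct, using the local product structure and the discreteness of the return-time subgroup to see that $\Lambda$ is locally a union of $W^{ss}\times W^{uu}$-plaques, hence an embedded $C^{1}$ hypersurface. The transverse-measure machinery belongs to Plante's later foliation work and is unnecessary overhead here. Your identification of the manifold step as the crux is correct, but the tool you reach for is anachronistic relative to the cited source.
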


We use the Theorem \ref{teo4.9}, Theorem \ref{teo4.11}, Ruelle's inequality, and Pesin's formula to prove the Theorem \ref{teo4.6} discarding some cases that may appear for our geodesic flow.

\subsection{Rigidity Theorems}
In this section, we prove the Theorem \ref{teo4.14}, we use it to prove Theorem \ref{New Theorem}, and we use the structure of the geodesic flow to prove Theorem \ref{teo4.6}. Finally, we prove Theorem \ref{Teo_New_1} using techniques of surfaces without focal points. 

\begin{proof}[\emph{\textbf{Proof of Theorem \ref{teo4.14}}}] Denote by $\mathcal{M}_{e}(\phi^t)$ the set of all ergodic $\phi^{t}$-invariant measures.

From Oseledets' ergodic theorem (see \cite{Osel}), for $\mu\in\mathcal{M}_{e}(\phi^t)$ consider $\lambda_{1}\leq \cdots \leq \lambda_{n-1}\leq 0\leq \lambda_{n+1}\leq \cdots\leq \lambda_{2n-1}$ being the Lyapunov exponents associated with $\mu$. Let $\theta\in Per(\phi^t)$ and $\mu_{\theta}\in\mathcal{M}_{e}(\phi^t)$ the unique $\phi^{t}$-invariant probability measure supported on the orbit of $\theta$. Thus, consider $\lambda_{1}^{\theta}\leq \cdots \leq \lambda_{n-1}^{\theta}\leq 0\leq \lambda_{n+1}^{\theta}\leq \cdots\leq \lambda_{2n-1}^{\theta}$ the Lyapunov exponents associated to $\mu_{\theta}$. By hypothesis $\lambda_{i}^{\theta}=-\alpha$, $i\in\{1,\dots, n-1\}$ and $\lambda_{i}^{\theta}=\alpha$, $i\in\{n+1, \dots, 2n-1\}$. From Theorem \ref{teo4.7} we can approximate the Lyapunov exponents of $\mu$ by Lyapunov exponents of $\mu_{\theta}$. Thus, we can conclude that for $\mu\in\mathcal{M}_{e}(\phi^t)$ holds $\lambda_{i}=-\alpha$, $i\in\{1,\dots, n-1\}$ and $\lambda_{i}=\alpha$, $i\in\{n+1, \dots, 2n-1\}$.\\
Since $\phi^t$ is Anosov, then normalize Liouville measure $\mathcal{L}$ is an element of $\mathcal{M}_{e}(\phi^t)$ (see \cite{VO}). 

Now we will show that the Liouville measure on $SM$ is a maximal measure of entropy. Indeed, by Ruelle's inequality, for $\mu\in\mathcal{M}_{e}(\phi^t)$ we have that 
\begin{eqnarray}\label{ec412}\nonumber
h_{\mu}(\phi)&\leq &\int_{SM}\sum_{\lambda_i>0} \lambda_i\, d\mu(\theta)\\\nonumber
&=&\int_{SM}(n-1)\alpha\, d\mu(\theta)\\
&=&(n-1)\alpha.
\end{eqnarray}

\noindent Since $\phi^{t}$ is $C^{2}$, by Pesin's formula we obtain that
\begin{eqnarray}\label{ec413}\nonumber
h_{\mathcal{L}}(\phi)&=&\int_{SM}\sum_{\lambda_i>0} \lambda_i\,(\theta)d\mathcal{L}(\theta)\\\nonumber
&=&\int_{SM}(n-1)\alpha d\mathcal{L}(\theta)\\
&=&(n-1)\alpha.
\end{eqnarray}

\noindent Thus, from (\ref{ec412}) and (\ref{ec413}), we conclude that for all $\mu\in\mathcal{M}_{e}(\phi^t)$ holds $$h_{\mu}(\phi)\leq (n-1)\alpha=h_{\mathcal{L}}(\phi).$$
Thus, from the variational principle 
\begin{equation*}\label{ec4.11}
h_{top}(\phi)=\sup_{\mu\in\mathcal{M}_{e}(\phi^t)}h_{\mu}(\phi)\leq (n-1)\alpha=h_{\mathcal{L}}(\phi)\leq h_{top}(\phi),
\end{equation*}
which concludes the proof of the theorem.
\end{proof}

The proof of Theorem \ref{teo4.14} implies the following corollary.
\begin{coro}\label{CorNew2} If $\phi^t\colon N \to N$ is a tridimensional conservative Anosov flow, then if the Lyapunov exponents are constant along periodic orbits, then the volume measure is a measure of maximal entropy.
\end{coro}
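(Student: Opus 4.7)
The plan is to re-run the argument of Theorem \ref{teo4.14} in the abstract setting of a three-dimensional conservative Anosov flow, using only the input that the Anosov splitting and volume preservation provide (in place of the geometric structure coming from the geodesic flow). Since $\dim N = 3$ and $\phi^t$ is Anosov, the splitting $TN = E^s \oplus \langle X\rangle \oplus E^u$ has $\dim E^s = \dim E^u = 1$, so along any $\theta \in Per(\phi^t)$ there is exactly one positive Lyapunov exponent, one zero (the flow direction), and one negative. The assumption is that the positive one equals a constant $\alpha > 0$. Because $\phi^t$ preserves volume, the sum of the three Lyapunov exponents at any periodic orbit vanishes (it equals the time-average of $\mathrm{div}\, X \equiv 0$), so the negative exponent automatically equals $-\alpha$.

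Next I would invoke Kalinin's theorem (Theorem \ref{teo4.7}) for the derivative cocycle $\mathcal{A}(x,t) = d_x\phi^t$. Given any $\mu \in \mathcal{M}_e(\phi^t)$, its Lyapunov exponents can be approximated with arbitrary accuracy by Lyapunov exponents associated with periodic-orbit measures $\mu_\theta$. Since those periodic exponents are identically $-\alpha, 0, \alpha$, the same triple must be the Lyapunov spectrum of every ergodic $\phi^t$-invariant measure $\mu$.

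The conclusion then follows by pairing Ruelle's inequality with Pesin's formula and the variational principle. For any $\mu \in \mathcal{M}_e(\phi^t)$ Ruelle's inequality gives
\[
h_\mu(\phi^t) \;\leq\; \sum_{\lambda_i > 0} \lambda_i \;=\; \alpha,
\]
while the smooth invariant measure $m$ is ergodic by Anosov's classical theorem for $C^2$ volume-preserving Anosov flows, so Pesin's formula yields $h_m(\phi^t) = \alpha$. The variational principle then forces
\[
h_{top}(\phi^t) \;=\; \sup_{\mu \in \mathcal{M}_e(\phi^t)} h_\mu(\phi^t) \;=\; \alpha \;=\; h_m(\phi^t),
\]
so $m$ is a measure of maximal entropy.

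I do not anticipate any serious obstacle: the corollary is essentially a translation of the proof of Theorem \ref{teo4.14}, with the two points of minor adaptation being (i) deriving the balanced spectrum $\{-\alpha, 0, \alpha\}$ from conservativity rather than from the symplectic symmetry of a geodesic flow, and (ii) appealing to ergodicity of the volume measure via Anosov's theorem instead of the explicit Liouville-ergodicity used in the surface case. Both are standard, and the rest of the argument is a verbatim copy.
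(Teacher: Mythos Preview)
Your proposal is correct and matches the paper's approach exactly: the paper's proof simply states that it ``follows the same arguments of the proof of Theorem \ref{teo4.14} and the ergodicity of the volume measure,'' and you have accurately spelled out those arguments, including the two adaptations you flagged (balanced spectrum from volume preservation, ergodicity from Anosov's theorem). There is nothing to add or correct.
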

\begin{proof}
The proof follows the same arguments of the proof of Theorem \ref{teo4.14} and the ergodicity of the volume measure (see \cite{VO}).
\end{proof}

\begin{proof}[\emph{\textbf{Proof of Theorem \ref{New Theorem}}}] \ \\
$(1)\implies(2)$ is a consequence of Corollary \ref{CorNew2}.
$(2)\implies(3)$ is a consequence of Theorem \ref{teo4.9}.
In order to prove that $(3)\implies (1)$, observe that the condition $(1)$ is preserved by smooth conjugacy. Finally, we note that any algebraic tridimensional flow satisfies the condition $(1)$, therefore the condition $(1)$ is satisfied. 
\end{proof}
To the proof of Theorem \ref{teo4.6} we need to use the classification of transitive tridimensional Anosov flow,  the structure of the geodesic flow. 

\begin{proof}[\emph{\textbf{Proof of Theorem \ref{teo4.6}}}]
It is easy to see that $(2)\implies(1)$. From Theorem \ref{New Theorem} our flow is smooth conjugate to an algebraic model. However, the geodesic flow has no global cross-section (see \cite{Pa}), therefore from Theorem \ref{teo4.11}, our flow must be smooth conjugate to the geodesic flow of a surface of constant negative curvature. Finally, from  \cite{Cr} our surface must be isometric to a surface of constant negative curvature, in other words, it has constant negative curvature, and the value of the Lyapunov exponents determines the value of the curvature.
\end{proof}
The remainder of this section will be dedicated to the proof of Theorem \ref{Teo_New_1}.

\ \\
We denote by $$\Gamma(\kappa):=\{\theta\in SM: K(\pi(\theta))\leq -\kappa^2\}$$
The next lemma is similar to  \cite[Lemma 2.4]{KB} and \cite[Lemma 2.6]{PW}.
\begin{lem}[Shadowing Lemma]\label{Shad_Lem}
Let $M$ be a closed surface without focal points. For any $\eta, \epsilon, \tau>0$ there exists $\delta>0$ such that for any collection of orbit segments
$\{(\theta_i,t_i)\}_{i\in \mathbb{Z}}$ with $\theta_i, \phi^{t_i}(\theta_i)\in \Gamma(\kappa)$, $t_i\geq \tau$ and  $d(\phi^{t_i}(\theta_i), \theta_{i+1})<\delta$ for all $i\in \mathbb{Z}$, there exist a geodesic $\gamma$  and a sequence of times $\{T_i\}_{i\in  \mathbb{Z}}$ with $T_0=0$, $T_i+t_i-\epsilon \leq T_{i+1} \leq T_i + t_i +\epsilon$, and $d(\gamma'(t), \gamma'_{\theta_i}(t-T_i))<\epsilon$ for all $t\in [T_i,T_{i+1}]$ and $i\in  \mathbb{Z}$.\\
The geodesic $\gamma$ is unique up to re-parametrization. Moreover, if the orbits being shadowed are periodic, then the shadowing orbit is also periodic.
\end{lem}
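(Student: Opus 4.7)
The plan is to adapt the shadowing arguments from Burns--Knieper \cite{KB} and Pesin--Weiss \cite{PW} --- cited in the statement --- to the two-dimensional no-focal-point setting, where the flow is non-uniformly hyperbolic but enjoys a distinguished hyperbolic set $\Gamma(\kappa)$. Three structural facts are the backbone. First, by Eberlein (used already in Section~2.2.1), the Green subbundles $G^{s},G^{u}$ depend continuously on $\theta\in SM$. Second, at any $\theta\in \Gamma(\kappa)$ the Riccati solution satisfies $\langle U_{\theta,u}(0)x,x\rangle\geq \kappa\langle x,x\rangle$, via the estimate (\ref{ec4.8}), giving a uniform instantaneous expansion on $G^{u}$ at such points. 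Third, for every Jacobi field tangent to $G^{u}$, $\|J(t)\|$ is non-decreasing, a direct consequence of the no-focal-point hypothesis.

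Using continuity of the Green bundles and the transversality $G^{s}\cap G^{u}=\{0\}$ at points of $\Gamma(\kappa)$ (guaranteed by Eberlein's characterization of the Anosov condition on the hyperbolic region), I would construct local strong-stable and strong-unstable horocyclic leaves of uniform size and set up a local product structure: for $\theta\in\Gamma(\kappa)$ and $\theta'$ with $d(\theta,\theta')<\delta_{0}(\kappa)$, the weak-unstable leaf at $\theta$ meets the strong-stable leaf at $\theta'$ transversally in a unique point depending continuously on $(\theta,\theta')$. Next, for a single segment $(\theta_{i},t_{i})$ with endpoints in $\Gamma(\kappa)$ and $t_{i}\geq\tau$, the monotonicity of $\|J\|$ bounds the expansion on $G^{u}$ over the middle portion below by $1$, while a window of length comparable to $\tau$ near each endpoint (where by continuity the estimate (\ref{ec4.8}) still holds) contributes a uniform factor $\lambda(\kappa,\tau)>1$; symmetric estimates hold on $G^{s}$. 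With these, the shadowing orbit is produced by applying the local product structure at every pseudo-orbit transition $\phi^{t_{i}}(\theta_{i})\rightsquigarrow\theta_{i+1}$ to pin down successive stable-unstable intersections, yielding, for each finite piece $(\theta_{-N},\dots,\theta_{N})$, a geodesic $\gamma_{N}$ that $\epsilon$-shadows it with reparametrization times $T^{N}_{i}$. A standard compactness/diagonal argument in the compact $SM$ gives the desired $\gamma$ and $\{T_{i}\}$; uniqueness up to reparametrization is expansivity, since two distinct shadowing orbits would produce a non-trivial relative perturbation lying in both Green subbundles at every return to $\Gamma(\kappa)$, violating their transversality. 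Periodicity when the pseudo-orbit is periodic then follows by applying uniqueness to the shadowing orbit and to its shift.

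The main obstacle is the quantitative hyperbolicity estimate in the second step. Because only the endpoints of each segment are assumed to lie in $\Gamma(\kappa)$, the middle of the orbit can spend arbitrarily long stretches outside the hyperbolic set, where the Riccati operator $U_{u}$ need not be uniformly positive and expansion degenerates to the neutral lower bound provided by monotonicity. The hypothesis $t_{i}\geq\tau$ is essential: it buys a definite residence time near the hyperbolic region at each endpoint, generating a uniform factor $\lambda(\kappa,\tau)>1$ that the neutral middle cannot erase. Turning this into a uniform choice of $\delta=\delta(\eta,\epsilon,\tau)$ valid across all admissible middle behaviors --- with $\eta$ controlling the closeness/hyperbolicity parameter that sizes the local product boxes --- is the delicate technical point, and is where the adaptation of \cite{KB,PW} to the surface no-focal-point setting is carried out.
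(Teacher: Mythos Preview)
The paper does not actually prove this lemma: it states it and remarks that it is ``similar to \cite[Lemma 2.4]{KB} and \cite[Lemma 2.6]{PW},'' then moves on. (A small correction: in this paper \cite{KB} is Burns--Gelfert and \cite{PW} is Park--Want, not Burns--Knieper and Pesin--Weiss.) So there is no in-paper proof to compare against; your sketch is, in outline, the scheme carried out in those references --- local product structure on the hyperbolic region, a uniform expansion/contraction factor harvested over each segment with endpoints in $\Gamma(\kappa)$ and length $\geq\tau$, finite-piece shadowing passed to the limit, and uniqueness from expansivity. That architecture is correct.

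There is, however, a genuine error in your second ``structural fact.'' The inequality (\ref{ec4.8}) is stated in the paper under the global hypothesis $K\leq -b^{2}$; the lower bound on $U_{\theta,u}(0)$ depends on the curvature along the entire past of $\gamma_{\theta}$, not on $K(\pi(\theta))$ alone. Membership in $\Gamma(\kappa)$ is a pointwise footpoint condition, so a geodesic can enter $\Gamma(\kappa)$ after an arbitrarily long flat stretch with $u_{\theta}(0)$ arbitrarily close to $0$; the claimed instantaneous bound $U_{\theta,u}(0)\geq\kappa I$ is false in general, and (\ref{ec4.8}) cannot be invoked. What the cited references actually use is the dynamical consequence of the Riccati equation together with $u\geq 0$ (no focal points): once the orbit visits curvature $\leq -\kappa^{2}$, the solution is driven above a positive threshold after a time depending only on $\kappa$, and this, together with the monotonicity of $\|J\|$ on the remainder of the segment, produces the uniform factor $\lambda(\kappa,\tau)>1$ over the whole segment. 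Your later paragraph points exactly to this mechanism (``a window of length comparable to $\tau$ near each endpoint''), but you should replace the pointwise appeal to (\ref{ec4.8}) by this integrated Riccati argument; otherwise the uniform choice of $\delta$ is not justified.
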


\begin{defi} Let $M$ be a complete manifold without focal points. For each $\theta\in SM$, we define $\text{rank}\,(\theta)$ as the dimension of the vector space of the parallel Jacobi fields along the geodesic $\gamma_{\theta}$, and $\text{rank}(M):=\min\{\text{rank}(\theta): \theta\in SM\}$. 
\end{defi}

It is not difficult to see that for manifold without focal points $\text{rank}(\theta)$ is the codimension of the vector bundle $G^s_{\theta}\oplus G^u_{\theta}$. Consider the following set 
$$\mathcal{H}=\{\theta\in SM: K(\gamma_{\theta}(t))=0, \,\, \text{for all}\,\, t\in \mathbb{R}\}.$$

The set $\mathcal{H}$ is clearly invariant to the geodesic flow. In general, the set $\mathcal{R}=SM\setminus \mathcal{H}$ is called \emph{regular set}. It is well known that, in the case of a surface without focal points, the Liouville measure is ergodic for $\phi^t$ in $\mathcal{R}$ (see \cite{Barr-Pes}). However, there is an open conjecture about the ergodicity of the Liouville measure over the whole surface. Such a conjecture is equivalent to the set $\mathcal{R}$ having total Liouville measure or the set $\mathcal{H}$ having zero Liouville measure (see \cite{Burns}).

It is easy to see that for surfaces without focal points $$\mathcal{H}=\{\theta\in SM: G^s_{\theta}\cap G^u_{\theta}\neq \{0\}\}.$$
Therefore  $\mathcal{H}=\emptyset$ is equivalent to geodesic flow $\phi^t$ to be Anosov (see \cite{Ebe}).
 Observe that in the case of surface, $\text{rank}(\theta)$ only can be $1$ or $2$. It is easy to see that if $\theta\in \mathcal{H}$, then $\text{rank}(\theta)=2$ and if $\theta\in SM\setminus \mathcal{H}$,  then $\text{rank}(\theta)=1$.

\begin{lem}\label{transtivity}Let $M$ be a compact surface of genus greater than $1$ and without focal points, then the geodesic flow $\phi^t: SM \to SM$ is transitive.
\end{lem}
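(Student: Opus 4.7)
The plan is to combine the Gauss--Bonnet obstruction with Lemma~\ref{Shad_Lem} in order to exhibit a dense orbit of $\phi^t$. Since $g(M)>1$, one has $\int_M K\, dA=2\pi\chi(M)<0$, so $\{K<0\}$ is a non-empty open subset of $M$; pick $\kappa>0$ such that $\{K\leq -\kappa^2\}$ has non-empty interior. Then $\Gamma(\kappa)$ contains a non-empty open set $\mathcal{O}\subset SM$, and every $\theta\in \mathcal{O}$ satisfies $K(\pi(\theta))<0$, so $\theta\notin\mathcal{H}$ and hence $\mathcal{O}\subset \mathcal{R}$.

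I would next produce a hyperbolic periodic orbit meeting $\mathcal{O}$. Because the Liouville measure $\mathcal{L}$ is a finite $\phi^t$-invariant probability with $\mathcal{L}(\mathcal{O})>0$, Poincar\'e recurrence yields $\theta_0\in \mathcal{O}$ returning to $\mathcal{O}$ at some large time $t_0$ with $d(\phi^{t_0}(\theta_0),\theta_0)<\delta$, for $\delta$ as small as desired. Choosing $\delta$ below the threshold provided by Lemma~\ref{Shad_Lem} for prescribed $\eta,\epsilon,\tau$, the constant periodic pseudo-orbit $\{(\theta_0,t_0)\}_{i\in\mathbb{Z}}$ is shadowed by a genuine closed geodesic $\theta_{\ast}$ whose orbit lies within $\epsilon$ of that of $\theta_0$, hence is contained in $\mathcal{R}$.

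For transitivity, let $U,V\subset SM$ be non-empty open sets. Poincar\'e recurrence for $\phi^t$ and $\phi^{-t}$ produces full-measure (in particular dense) sets of vectors whose forward (resp.\ backward) orbit enters $\mathcal{O}$; pick $\alpha\in U$, $\beta\in V$ and $a,b>0$ with $\phi^{a}(\alpha),\phi^{-b}(\beta)\in\mathcal{O}$. Using the ergodicity of $\mathcal{L}$ on $\mathcal{R}$ (cf.~\cite{Barr-Pes}), $\mathcal{L}$-almost every orbit in $\mathcal{R}$ is dense there and in particular visits $\mathcal{O}$ densely; sampling such a dense orbit at successive returns to $\mathcal{O}$ furnishes a finite $\delta$-pseudo-orbit with all breakpoints in $\mathcal{O}\subset\Gamma(\kappa)$ running from (a vector close to) $\phi^{a}(\alpha)$ to (a vector close to) $\phi^{-b}(\beta)$. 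A final application of Lemma~\ref{Shad_Lem} to the concatenated pseudo-orbit $\alpha\to\phi^{a}(\alpha)\to\cdots\to\phi^{-b}(\beta)\to\beta$ produces a true geodesic $\epsilon$-shadowing it; for $\epsilon$ small enough that $B(\alpha,\epsilon)\subset U$ and $B(\beta,\epsilon)\subset V$, this orbit meets both $U$ and $V$.

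The hardest part is arranging that the connecting pseudo-orbit has all its breakpoints in $\Gamma(\kappa)$ while respecting the $\delta$-jump bound, since the shadowing lemma only controls pseudo-orbits anchored in the uniformly negatively curved region. The ergodicity of $\mathcal{L}$ on $\mathcal{R}$ is what makes this possible: it forces a typical regular orbit to visit $\mathcal{O}$ densely, providing in one stroke both the dense sampling and the control on the breakpoints. Verifying carefully that the chosen breakpoints can be arranged to start arbitrarily close to $\phi^{a}(\alpha)$ and end arbitrarily close to $\phi^{-b}(\beta)$ would be the technical heart of the argument.
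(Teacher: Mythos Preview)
Your approach is quite different from the paper's. The paper gives essentially a two-line argument: genus $>1$ forces $h_{top}(\phi^t)>0$ (via \cite{Di}), so by the variational principle some vector has a positive Lyapunov exponent, hence $\mathrm{rank}(M)=1$; transitivity then follows by quoting Liu--Zhu \cite{LZ}, which proves transitivity of the geodesic flow on any closed rank~$1$ manifold without focal points. The remark after the lemma notes, as you do, that Gauss--Bonnet already produces a point of rank~$1$.

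Your direct attack via the shadowing lemma has a genuine gap. The crucial step is the claim that every non-empty open $U\subset SM$ contains a vector whose forward orbit enters $\mathcal{O}$. Poincar\'e recurrence alone does not give this: it only says that a.e.\ point of $\mathcal{O}$ returns to $\mathcal{O}$. Ergodicity of $\mathcal{L}$ on $\mathcal{R}$ does imply that $\mathcal{L}$-a.e.\ point of $\mathcal{R}$ has an orbit dense in $\mathcal{R}$, but that only makes such points dense in $\overline{\mathcal{R}}$, not in $SM$. If $\mathcal{H}$ had non-empty interior your argument could not reach an open set $U\subset\mathrm{int}(\mathcal{H})$. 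You never argue that $\mathrm{int}(\mathcal{H})=\emptyset$, and the paper itself flags that even the measure-theoretic statement $\mathcal{L}(\mathcal{H})=0$ is an open conjecture. Establishing that regular vectors are dense is precisely one of the substantive ingredients in results such as \cite{LZ}, so closing this gap would amount to re-proving a chunk of the black box the paper simply cites.

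Two smaller points. First, the periodic orbit $\theta_\ast$ you construct in your second paragraph is never used afterwards; it can be dropped. Second, in your final concatenated pseudo-orbit the extreme breakpoints $\alpha,\beta$ need not lie in $\Gamma(\kappa)$, so Lemma~\ref{Shad_Lem} does not apply to the full concatenation as stated. This one is fixable: shadow only the portion from $\phi^{a}(\alpha)$ to $\phi^{-b}(\beta)$ and then use continuity of $\phi^{-a}$ and $\phi^{b}$ to land near $\alpha$ and $\beta$; but it should be said explicitly.
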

\begin{proof}
We only need to prove that $M$ has rank $1$ and then the result will become a consequence of \cite{LZ}. As $\text{genus of} \, M >1$, then the Euler characteristic of $M$ is negative, then the topological entropy $h_{top}(\phi^t)>0$ (cf. \cite{Di}).  Consequently, from variational principle,  there is an ergodic measure with positive Lyapunov exponents, \emph{i.e} there is $\theta\in SM$ and $\eta\in T_\theta SM$ such that $\chi^{+}(\theta, \eta)>0$. Using the above notation, 
$$\chi^{+}(\theta, \eta)=\lim_{t\to \infty}\frac{1}{t}\log ||J_{\eta}(t)||>0.$$
Therefore,  $\text{rank}(\theta)=1$. In fact, $\theta\in SM \setminus \mathcal{H}$ and 
$\text{rank}(M)=1$.  
\end{proof}

\begin{remark}The proof of the last lemma can be easier using the Gauss-Bonnet theorem since it should have points in $M$ of negative curvature. Therefore, together with the condition of no focal points, it implies the existence of an unbounded unstable Jacobi field. In particular, we have a point with rank $1$. Also, using arguments of Eberlein \emph{(see \cite{Ebe})}, we can see easily that parallel Jacobi field along a geodesic $\gamma$ implies section curvature zero along $\gamma$, so geodesic pass through point of negative curvature provides points of rank $1$.
\end{remark}

The next lemma is similar to at \cite[Proposition 3]{KB}. and \cite[Proposition 3.4]{PW}.

\begin{lem}\label{Lem_New_1} If $ \mathcal{H}\neq \emptyset$, then there is a periodic orbit with Lyapunov exponent arbitrarily close to $0$.
\end{lem}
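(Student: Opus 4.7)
The plan is to construct, for any $\varepsilon>0$, a periodic orbit which shadows an orbit sitting entirely in $\mathcal{H}$ for most of its period; along such an orbit the sectional curvature is nearly zero on a long subinterval, so the bound (\ref{Lyapunov and Curvature}) forces the positive Lyapunov exponent to be small. Fix $\theta_0\in\mathcal{H}$, so $K(\gamma_{\theta_0}(t))=0$ for every $t\in\mathbb{R}$. If $\mathcal{H}=SM$ then $K\equiv 0$ on $M$ and, by Gauss--Bonnet, $M$ is the flat torus; every periodic orbit has vanishing Lyapunov exponent and the claim is trivial. Hence assume $\mathcal{H}\neq SM$, so $K<0$ somewhere; by Gauss--Bonnet, $\chi(M)<0$, $M$ has genus $\geq 2$, and Lemma \ref{transtivity} yields a point $\theta^{\star}\in SM$ whose orbit is dense. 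Fix also $\kappa>0$ small enough that $\Gamma(\kappa)\neq\emptyset$.

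Given $\varepsilon,\tau_0>0$, let $\delta=\delta(\varepsilon,\tau_0)>0$ be the constant furnished by Lemma \ref{Shad_Lem}. Using density of $\{\phi^t(\theta^{\star})\}_{t\in\mathbb{R}}$, for any prescribed large $T>0$ one can choose $s_1<s_2$ with $s_2-s_1\geq T$ such that (a) both $\phi^{s_1}(\theta^{\star})$ and $\phi^{s_2}(\theta^{\star})$ lie in $\Gamma(\kappa)$ and $d(\phi^{s_1}(\theta^{\star}),\phi^{s_2}(\theta^{\star}))<\delta$, and (b) on a central subinterval $[s_1+L,s_2-L]$ of length at least $(s_2-s_1)-2L$ one has $d(\phi^{s_1+t}(\theta^{\star}),\phi^{t}(\theta_0))<\varepsilon$, for a fixed $L>0$ depending only on the geometry. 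Taking the bi-infinite periodic repetition of the single segment $(\phi^{s_1}(\theta^{\star}),\,s_2-s_1)$ supplies a pseudo-orbit satisfying the hypotheses of Lemma \ref{Shad_Lem}, which therefore produces a periodic geodesic $\theta_p$ of period $\tau$ with $|\tau-(s_2-s_1)|\leq\varepsilon$ that $\varepsilon$-shadows the segment. By (b) and the triangle inequality, $\gamma_{\theta_p}$ is $2\varepsilon$-close to $\gamma_{\theta_0}$ on a subinterval of $[0,\tau]$ of length at least $\tau-2L-O(\varepsilon)$.

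On the shadowing subinterval, the uniform continuity of $K$ on the compact surface $M$ together with $K\equiv 0$ along $\gamma_{\theta_0}$ yield $|K(\gamma_{\theta_p}(t))|\leq \omega(\varepsilon)$ with $\omega(\varepsilon)\to 0$ as $\varepsilon\to 0$, while on the remaining portion of length $\leq 2L+O(\varepsilon)$ one has $|K|\leq K_{\max}:=\max_M|K|$. Therefore
\begin{equation*}
\frac{1}{\tau}\left|\int_0^\tau K(\gamma_{\theta_p}(t))\,dt\right|\;\leq\; \omega(\varepsilon)+\frac{(2L+O(\varepsilon))K_{\max}}{\tau},
\end{equation*}
which can be made arbitrarily small by first choosing $\varepsilon$ small and then $T$ (hence $\tau$) large. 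Substituting into (\ref{Lyapunov and Curvature}) gives $\chi^{+}(\theta_p,\eta)$ arbitrarily close to $0$.

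The main obstacle is arranging the three conditions on the segment $(\phi^{s_1}(\theta^{\star}),\,s_2-s_1)$ simultaneously: its endpoints must lie in $\Gamma(\kappa)$ so that Lemma \ref{Shad_Lem} applies, the near-closing condition at the endpoints must hold so that the shadowing orbit is genuinely periodic, and the central portion must stay $\varepsilon$-close to $\gamma_{\theta_0}$ so that the average curvature is small. This juggling is possible precisely because the flow is transitive, which lets us prescribe finitely many checkpoints along the orbit of $\theta^{\star}$; once the pseudo-orbit is in hand, the shadowing lemma and the Riccati-based bound (\ref{Lyapunov and Curvature}) close the argument.
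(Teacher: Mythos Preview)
Your overall strategy---build a periodic orbit, via Lemma~\ref{Shad_Lem}, that spends most of its period close to a geodesic $\gamma_{\theta_0}\subset\mathcal{H}$, and then feed the resulting small average curvature into the bound~(\ref{Lyapunov and Curvature})---is exactly the paper's route. The difference is in how the pseudo-orbit is assembled, and that is where your argument has a genuine gap.

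You try to use a \emph{single} segment of a dense orbit, arranged so that its endpoints sit in $\Gamma(\kappa)$ and nearly close up, while its middle of length $\tau-2L$ shadows $\gamma_{\theta_0}$. The unjustified step is the assertion that $L$ is ``fixed, depending only on the geometry''. To keep the central portion $\varepsilon$-close to $\gamma_{\theta_0}$ for time $T$, the dense orbit must approach $\theta_0$ within some $\eta(T)\to 0$; but the time it then takes, along that same dense orbit, to reach a prescribed $\delta$-ball inside $\Gamma(\kappa)$ (before and after the close approach) is not uniformly bounded---return times to a fixed open set along a transitive orbit are in general unbounded. So nothing forces $L/\tau\to 0$, and your final inequality does not close. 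The concluding paragraph acknowledges this ``juggling'' as the main obstacle but appeals only to transitivity, which gives existence of the checkpoints, not a length bound on the connecting arcs.

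The paper sidesteps this by using a \emph{two}-segment pseudo-orbit: a long piece $\phi^t(\theta_k)|_{[t_k^-,t_k^+]}$ with $\theta_k\to\theta_0$ and endpoints in $\Gamma(\kappa)$, concatenated with a single closing arc $\phi^t(w)|_{[0,T]}$ from near $w^+=\lim\phi^{t_k^+}(\theta_k)$ to near $w^-=\lim\phi^{t_k^-}(\theta_k)$. The point is that this closing arc is chosen once and has length $T$ independent of $k$; hence its contribution to the curvature average is $O(T/P_k)\to 0$. Only the long piece needs its average curvature controlled, and that comes from $\theta_k\to\theta_0\in\mathcal{H}$ (with details deferred to \cite{KB,PW}). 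Your scheme can be repaired in the same way: keep the near-$\gamma_{\theta_0}$ stretch as one segment and append a separate fixed-length closing segment produced by transitivity; then there is exactly one connecting arc whose length is fixed before $T$ is sent to infinity, rather than two whose lengths must be bounded simultaneously for every $T$. (A small side remark: the line ``$K<0$ somewhere, so by Gauss--Bonnet $\chi(M)<0$'' is not quite right, since surfaces without focal points may have regions of positive curvature; the correct route is Hopf's theorem: no conjugate points forces genus $\geq 1$, and genus $1$ would force $M$ flat, contradicting $\mathcal{H}\neq SM$.)
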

\begin{proof}
By hypotheses there is $\theta\in  \mathcal{H}$ and consequently $K(\gamma_{\theta}(t))=0$, for all $t\in \mathbb{R}$. Given $\kappa>0$, by transitivity (see Lemma \ref{transtivity}), there is a sequence $\theta_k$ converges to $\theta$ and unbounded negative and positive sequences $t_k^{-}$ and $t_k^{+}$, respectively,  such that 
$$K(\gamma_{\theta_{k}}(t))\leq -\kappa^2, \,\,\,\text{for all} \,\,\, t\in[t_k^-, t_k^+].$$ 
By compactness, we can assume that $\phi^{t_{k}^{-}}(\theta_k)$ and $\phi^{t_{k}^{+}}(\theta_k)$ converge to $w^{-}$ and $w^{+}$, respectively. 
Note that $K(\pi(w^-))\leq -\kappa^2$ and $K(\pi(w^{+}))\leq -\kappa^2$. \\
Consider $\epsilon$ and $\delta$ form Lemma \ref{Shad_Lem} (Shadowing Lemma), then, using the transitivity of the flow, there is $T>\tau$ and $w$ with $d(w,w^+)<\frac{\delta}{2}$ such that $d(\phi^{T}(w),w^-)<\frac{\delta}{2}$.
Thus, from Lemma \ref{Shad_Lem}, for any large enough $k$ there is a closed geodesic $\beta_k$ of period $P_k$, close to $t_k^{+}-t_{k}^{-}+T$ such that $(\tau,\epsilon)$-shadows the pseudo-orbit formed by $\phi^t(\theta_{k})|_{[t_k^-, t_k^+]}$ and $\phi^{t}(w)|_{[0,T]}$.
It is easy to see that $P_k$ is an unbounded sequence, moreover the path of orbit $\phi^{t}(w)|_{[0,T]}$ such that $\beta_{k}$ $(\tau,\epsilon)$-shadows does not depend on $k$, then the proportion of the curvature along $\beta_k$ is bounded. Moreover, by construction, the curvature of $\beta_k$ is close to the curvature of $\pi(\phi^t(\theta_{k})|_{[t_k^-, t_k^+]})$ most part of time. In other words, 
$$\limsup_{k\to \infty}\frac{1}{P_k}\int_{0}^{P_k}K(\beta_{k}(t))dt\geq -2\kappa^2.$$
Put $\beta_{k}(0)=w_k$ and $\eta\in G^{u}_{w_k}$, then from (\ref{Lyapunov and Curvature})

$$0\leq \chi^{+}(w_k, \eta)\leq \sqrt{-\frac{1}{P_k}\int_{0}^{P_k}K(\beta_{k}(t))dt}\leq \sqrt{2}\kappa.$$
Since $\kappa$ was chosen arbitrarily,  we have our result.

\end{proof}


\begin{proof}[\emph{\textbf{Proof of Theorem \ref{Teo_New_1}}}]
It is easy to see that $(2)\implies(1)$. We prove $(1)\implies(2)$ using the genus of the surface $M$. Note that $M$ has no focal points, in particular, has no conjugate points. Thus, from Hopf's result (cf. \cite{hopf}), $M$ has genus $\geq 1$. In the case of genus equal to $1$, from \cite{hopf}  our surface $M$ is the flat torus $\mathbb{T}^2$ and consequently $\alpha=0$. So, we can assume that $M$ has a genus greater than $1$. \\
\textbf{Claim:} The set $ \mathcal{H}= \emptyset$.
In fact, by contradiction, assume that there is $\theta \in \mathcal{H}$. Therefore, from Lemma \ref{Lem_New_1}, we have periodic orbits with Lyapunov exponent arbitrarily close to $0$, however since the value of $\alpha$ in the hypothesis does not depend on the periodic points, then we have a contradiction. \\
From the last Claim, all geodesics pass through negative curvature. Thus, as $M$ has no focal points then $\phi^t$ is Anosov (cf. \cite{Ebe}). Finally, since the Lyapunov exponents are constant along periodic orbits, from Theorem \ref{teo4.14} the Liouville measure is a measure of maximal entropy. Moreover, as $M$ is $C^2$ Riemannian metric without focal points, then \cite[Corollary 3.3]{Katok82} we conclude that $M$ has constant negative curvature and the value of the curvature is $-\alpha^2$, as we wished. 
\end{proof}

\begin{remark}In the final arguments of the proof of Theorem \ref{Teo_New_1}, even the geodesic flow becomes Anosov, we cannot use \emph{Theorem \ref{teo4.6}} since in the assumptions of \emph{Theorem \ref{Teo_New_1}} our metric is just $C^2$, different from \emph{Theorem \ref{teo4.6}} where the metric is at least $C^5$.
\end{remark}

On the hypothesis of Theorem \ref{teo4.6} over the geodesic flow be $C^{k}$, $k\geq 5$, we believe the theorem is true for $k\geq 2$, because observation of the Theorem \ref{teo4.9} on \cite{SL} says that this theorem can be true for $k\geq 2$, but technical obstructions prevent finding the precise boundary of required regularity. 

\section{Rigidity of Lyapunov exponents in finite volume case.}
The main goal of this section is to prove the Theorem \ref{thm4.5}. Therefore, from now on, we  consider $M$ a complete Riemannian manifold with pinched negative curvature, \emph{i.e.}, there are two constants $0<b<c$ such that $-c^{2}\leq K_{M}\leq -b^{2}$. For  $\theta\in SM$, let  $Y_{\theta,s}(t)$ and $Y_{\theta,u}(t)$ the stable and unstable Jacobi tensor along $\gamma_{\theta}(t)$, respectively and $U_{\theta,s}(t)=Y'_{\theta,s}(t)Y^{-1}_{\theta,s}(t)$ and $U_{\theta,u}(t)=Y'_{\theta,u}(t)Y^{-1}_{\theta,u}(t)$ the stable and unstable Riccati solution, respectively.

\subsection{Proof of the Theorem \ref{thm4.5}}
One of the main ingredients of the proof of Theorem \ref{thm4.5} is the following theorem:

\begin{teo}[\cite{Nina}]\label{teo2.7}
If $\phi^t\colon SM\to SM$ is an Anosov geodesic flow and $M$ has finite volume, then $\overline{Per(\phi^t)}=SM$.
\end{teo}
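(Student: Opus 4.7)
The plan is to combine three classical ingredients: the invariance and finiteness of the Liouville measure $\mathcal{L}$ on $SM$, the Poincaré recurrence theorem, and the Anosov closing lemma. Concretely, I will first show that the non-wandering set $\Omega(\phi^t)$ equals all of $SM$ via measure-theoretic recurrence, and then invoke the closing lemma for the uniformly hyperbolic flow $\phi^t$ to conclude that the periodic orbits are dense in $\Omega(\phi^t)=SM$.

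First I would observe that $\mathcal{L}$ is finite on $SM$: since $SM$ fibers over $M$ with compact fibers $S^{n-1}$ of constant Riemannian volume, the hypothesis that $M$ has finite volume implies $\mathcal{L}(SM)<\infty$. The Liouville theorem, which is local and holds on any Riemannian manifold, guarantees that $\mathcal{L}$ is $\phi^t$-invariant. Moreover $\mathcal{L}$ has full support in $SM$, since every non-empty open set contains a product of a metric ball in $M$ and a spherical cap, which has positive volume.

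Next, by the Poincaré recurrence theorem applied to the finite invariant measure $\mathcal{L}$, the set
\[
R:=\{\theta\in SM : \exists\, t_n\to+\infty \text{ with } \phi^{t_n}(\theta)\to \theta\}
\]
has full $\mathcal{L}$-measure. Because $\mathcal{L}$ has full support, $R$ is dense in $SM$. Every recurrent point is non-wandering, so $R\subseteq \Omega(\phi^t)$, and since $\Omega(\phi^t)$ is closed we conclude $\Omega(\phi^t)=SM$.

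Finally, I would invoke the Anosov closing lemma: since the geodesic flow is Anosov on the whole of $SM$, the hyperbolicity constants $C,\lambda$ in the splitting $T(SM)=E^s\oplus\langle G\rangle\oplus E^u$ are uniform. The closing lemma then asserts that for every $\theta\in\Omega(\phi^t)$ and every $\varepsilon>0$ there exists a periodic orbit within $\varepsilon$ of $\theta$; hence $\overline{\mathrm{Per}(\phi^t)}\supseteq \Omega(\phi^t)=SM$, proving the theorem. The main obstacle I anticipate is precisely the uniform validity of the Anosov closing lemma in the non-compact setting, since the local product structure (widths of local stable/unstable manifolds and cone opening) must not degenerate through the cusps of $M$. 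This is handled by the fact that the Anosov hypothesis as stated provides uniform constants $C,\lambda$ together with a continuous invariant splitting in the Sasaki metric; one can argue by restricting to the recurrent set inside a compact sub-level of $SM$ of almost full $\mathcal{L}$-measure, where the standard proof of the closing lemma applies verbatim, and then use density of $R$ together with invariance of $\overline{\mathrm{Per}(\phi^t)}$ under the flow to extend the conclusion to all of $SM$.
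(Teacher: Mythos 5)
The overall strategy you propose --- use finiteness and invariance of the Liouville measure to get Poincar\'e recurrence, conclude $\Omega(\phi^t)=SM$, and then close recurrent orbits --- is the natural one, and the measure-theoretic half is correct and complete. The problem lies in the step you yourself flag, and your proposed repair does not actually close it.

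The Anosov closing lemma, in any of its standard proofs, does not only need uniform constants $C,\lambda$ in the hyperbolic splitting: it also needs (i) a uniform bound on $\|D\phi^1\|$, (ii) uniform continuity of the splitting $E^s\oplus\langle G\rangle\oplus E^u$ and a uniform lower bound on the angle between $E^s$ and $E^u$, and (iii) local product structure with uniformly sized local stable/unstable disks. In the compact case these all come for free; in the non-compact finite-volume case none of them are automatic from the Anosov hypothesis as stated. In particular, the inequalities $\|D\phi^t|_{E^s}\|\le C\lambda^t$ and $\|D\phi^{-t}|_{E^u}\|\le C\lambda^t$ give only a \emph{lower} bound on unstable expansion and an \emph{upper} bound on stable contraction; they say nothing about an upper bound on $\|D\phi^1\|$. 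For the geodesic flow, $\|D\phi^1\|$ is governed by the curvature through the Jacobi equation, and the theorem does not assume $K_M$ bounded below. Thus the Lipschitz/bounded-derivative input that every proof of the closing lemma actually uses is missing.

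Your restriction ``to a compact sub-level $K$ of almost full measure'' does not repair this, because the datum the closing lemma needs is not local at the endpoints $\theta$ and $\phi^T(\theta)$: the fixed-point (or graph-transform) argument must track the orbit segment $\{\phi^t(\theta):0\le t\le T\}$, which for a recurrent $\theta$ in a finite-volume non-compact manifold can make arbitrarily deep excursions out of $K$ into the cusps before returning. The admissible $\delta$ in ``$d(\phi^T(\theta),\theta)<\delta$ implies a nearby periodic orbit'' therefore depends on the excursion, not only on the base point, and there is no a priori uniform $\delta$ over $K$. To actually carry out the plan, one must add precisely the kind of geometric control that the cited work provides --- for instance a curvature bound giving a uniform bound on the Riccati solutions via \eqref{EQ-Green}, or a cusp decomposition of a finite-volume manifold with controlled degeneration, or a shadowing argument built directly on Jacobi fields (compare Lemma~\ref{Shad_Lem}, which is proved with surface-specific tools). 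As written, the proposal identifies the obstacle correctly but does not overcome it, and the phrase ``the standard proof applies verbatim'' conceals the genuine technical content of the theorem.
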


\begin{proof}[\emph{\textbf{Proof of Theorem \ref{thm4.5}}}]
\ \\
The proof of Theorem \ref{thm4.5} carried out in two cases:

\noindent \textbf{Case 1:} We consider the case $\alpha=b$.\\
Note that the hypothesis $\chi^{+}(\theta,\xi)=b$ for all $\theta\in \text{Per}(\phi^t)$ and all $\xi\in E_{\theta}^{u}$.  So, $$\lim_{t\to+\infty}\dfrac{1}{t}\log|\det d_{\theta}\phi^{t}\vert_{ E_{\theta}^{u}}|=b\cdot \dim E_{\theta}^{u}=b(n-1).$$

\noindent From \cite[Lemma 3.2]{IR}, we obtain 
\begin{eqnarray}\label{ec4.10}
\lim_{t\to+\infty}\dfrac{1}{t}\int_{0}^{t}\text{tr}(U_{\theta, u}(s))ds=\lim_{t\to+\infty}\dfrac{1}{t}\log|\det d_{\theta}\phi^{t}|_{E_{\theta}^{u}}|=b(n-1),
\end{eqnarray}
for all $\theta\in \text{Per}(\phi^t)$.\\
However, if $\theta\in \text{Per}(\phi^t)$ of period $\tau$, then the function $t\to \text{tr}(U_{\theta, u}(t))$ is periodic of period $\tau$. From (\ref{ec4.10}) we have 
\begin{equation}\label{ENEW1}
\dfrac{1}{\tau}\int_{0}^{\tau}\text{tr}(U_{\theta, u}(s))ds=b(n-1).
\end{equation}

On the other hand, as $K\leq -b^{2}$ and $U_{\theta,u}(t)$ is symmetric, from (\ref{ec4.8}) we can conclude that all eigenvalues of $U_{\theta,u}(t))$ are non-negative, in fact, all of them are greater than or equal to $b$. Let $\lambda_{n-1}(t)\geq\lambda_{n-2}(t)\geq\cdots\geq\lambda_{1}(t)\geq b$ the eigenvalues of $U_{\theta,u}(t)$. Then
\begin{equation}\label{ENEW2}
 \text{tr}(U_{\theta,u}(t))=\sum_{i=1}^{n-1}\lambda_{i}(t)\geq \alpha(n-1).
\end{equation}

\noindent From (\ref{ENEW1}) and (\ref{ENEW2})  $$\text{tr}(U_{\theta,u}(t))=b(n-1)$$
for all $t\in\mathbb{R}$.\\
Hence, for all $j=1,2,\dots,n-1$ holds $\lambda_{j}(t)=b$ for all $t\in\mathbb{R}$. Finally this implies for all $t\in\mathbb{R}$ 
\begin{equation}\label{eq1New*}
\text{tr}(U_{\theta,u}(t))^{2})=\sum_{j=1}^{n-1}(\lambda_{j}(t))^{2}=b^{2}(n-1).
\end{equation}
From the Riccati equation 
$$\frac{1}{t}\int_{0}^{t}\text{tr}(U'_{\theta,u}(s))ds+\dfrac{1}{t}\int_{0}^{t}\text{tr}(U_{\theta,u}(s)^{2})ds + \dfrac{(n-1)}{t}\int_{0}^{t}\text{Ric}(\phi^{s}(\theta))=0$$
Taking $t$ goes to $\infty$, then (\ref{EQ-Green}) and (\ref{eq1New*}) provide 
\begin{equation*}
b^{2}(n-1)=-\lim_{t\to+\infty}\dfrac{(n-1)}{t}\int_{0}^{t}\text{Ric}(\phi^{s}(\theta))ds
\end{equation*}
for all $\theta\in \text{Per}(\phi^t)$.
If $\theta\in Per(\phi^t)$ of period $\tau$, then  the last equation becomes 
$$\dfrac{1}{\tau}\int_{0}^{\tau}\text{Ric}(\phi^{s}(\theta))ds=-b^{2}.$$

Note that  $K\leq -b^{2}$, in particular, $\text{Ric}(\phi^s(\theta))\leq -b^2$. Consequently, the last equation provides $\text{Ric}(\phi^{t}(\theta))=-b^{2}$, for all $t\in\mathbb{R}$. In resume, we had proved that for all $\theta\in \text{Per}(\phi^t)$ holds $\text{Ric}(\theta)=-\alpha^{2}$. As $M$ has finite volume, then from Theorem \ref{teo2.7} the periodic orbits are dense in $SM$, so the continuity of the function $\text{Ric}(\cdot)$ guarantees   $\text{Ric}(\theta)=-b^{2}$, for all $\theta\in SM$ and \emph{a fortiori},  $K_{M}=-b^{2}$.\\
\ \\
\noindent \textbf{Case 2:}  If $\alpha=c$. Here we use the same arguments of \cite{IR}. By the hypothesis $\chi^{+}(\theta,\xi)=c$ for all $\theta\in \text{Per}(\phi^t)$ and all $\xi\in E_{\theta}^{u}$. Thus 

$$\lim_{t\to+\infty}\dfrac{1}{t}\log|\det d_{\theta}\phi^{t}\vert_{ E_{\theta}^{u}}|=c\cdot\text{ dim} E_{\theta}^{u}=c(n-1).$$

\noindent If $\theta\in \text{Per}(\phi^t)$ of period $\tau$, then from \cite[Lemma 3.2]{IR}, we obtain 
\begin{eqnarray}\label{ec4.21}
\frac{1}{\tau}\int_{0}^{\tau}\text{tr}(U_{\theta,u}(s))ds=\lim_{t\to+\infty}\dfrac{1}{t}\log\left|\det d_{\theta}\phi^{t}|_{E_{\theta}^{u}}\right|=c(n-1).
\end{eqnarray}
Since $U^{u}(\phi^{s}(\theta))$ is symmetric then it is easy to see that
\begin{equation}\label{Eq2New*}
(\text{tr}(U_{\theta,u}(s))^{2}\leq (n-1)\text{tr}(U_{\theta,u}(s)^{2}).
\end{equation} 
Using the periodicity of the function $s \to tr(U_{\theta,u}(s))$ and  integrating from $0$ to $\tau$ the Riccati equation (\ref{ec1.3}), we obtain from (\ref{ec4.21}), (\ref{Eq2New*}), and Cauchy-Schwartz inequality that
\begin{eqnarray*}
c(n-1)=\dfrac{1}{\tau}\int_{0}^{\tau}\text{tr}(U_{\theta,u}(s))ds &\leq & \sqrt{\dfrac{1}{\tau}\int_{0}^{\tau}(\text{tr}(U_{\theta,u}(s))^{2}ds}\\
&\leq & \sqrt{\dfrac{(n-1)}{\tau}\int_{0}^{\tau}\text{tr}(U_{\theta,u}(s)^{2})ds}\\
&=&\sqrt{-\dfrac{(n-1)}{\tau}\int_{0}^{\tau}\text{tr}(R(s))ds}\\
&=&\sqrt{-\dfrac{(n-1)^{2}}{\tau}\int_{0}^{\tau}\text{Ric}(\phi^{s}(\theta))ds}\\
&\leq &\sqrt{(n-1)^{2}c^{2}},
\end{eqnarray*}
where in the last inequality we used that the sectional curvature satisfies $K_{M}\geq -c^{2}$.
Consequently, $$\dfrac{1}{\tau}\int_{0}^{\tau}\text{Ric}(\phi^{s}(\theta))ds=-c^2,$$
which implies that $\text{Ric}(\phi^{t}(\theta))=-c^{2}$ for all $\theta \in \text{Per}(\phi^t)$ and all $t\in \mathbb{R}$. The result follows the same lines of the case 1.




\end{proof} 

\textbf{Acknowledgements:} The first author thanks all the members of the Institute of Mathematics - UFRJ for all their support during the preparation of this work, and my family for being by my side whenever I needed them, I am so sorry for leaving so early. The second author thanks the first author for all his struggles in recent years, his memory will always be with me. The second author also thanks the Mathematics Department of SUSTech - China for its hospitality during the preparation of this article.

\bibliographystyle{plain}

\begin{thebibliography}{999}




\bibitem[1]{Anosov}
D. Anosov, \textsl{Geodesic flows on closed Riemannian manifolds with negative curvature}, Proc. Steklov Inst. Math. \textbf{90}, 1967.

\bibitem[2]{Barr-Pes}
L. Barreira and Y. B. Pesin. \textsl{Nonuniform hyperbolicity: Dynamics of systems with nonzero Lyapunov exponents}, 
\newblock{\em Cambridge University Press Cambridge}, \textbf{115}, 2007. 


\bibitem[3]{BCG} G. Besson, G. Courtois, and S. Gallot. \textit{Entropies et rigidit\'es des espace localement sym\'etriques de courbure strictement n\'egative}, \newblock{\em Geom Funct. Anal}, \text{5(5)},  731--799, (1995)

\bibitem[4]{Burns}
K. Burns and V. Matveev, \textsl{Open problems and questions about geodesics}, Ergodic Theory and Dynamical Systems, \textbf{41(3)}, 641--684 (2021).

\bibitem[5]{Bu} Clark Butler, \textit{Rigidity of Equality of Lyapunov exponents for geodesic flow}, J. Differential Geometry, 109(1): 39-79, 2018.

\bibitem[6]{Con} Conell, C.: \textit{Minimal Lyapunov Exponents, Quasiconformal Structures and Rigidity of Nonpositively Curved Manifolds}, \newblock{\em Ergodic Theory and Dynamical Systems} \textbf{23(2)}, 429--446, 2003.


\bibitem[7]{G} C. Croke, Irena Lasiecka, Gunther Uhlmann, Michael S. Vogelius. \textit{Geometric Methods in Inverse Problems and PDE Control}. \newblock{The IMA volumes in Mathematics and its applications}, 2003.
\bibitem[8]{CFF} C. Croke, A. Fathi, and J. Feldman, \textit{The marked length-spectrum of a surface of nonpositive curvature}, \newblock{\em Topology} \textbf{31(4)}, 847--855, 1992.

\bibitem[9]{Co} D. Constantine, \textit{2-Frame flow Dynamics and Hyperbolic Rank Rigidity in Nonpositive Curvature}. \newblock{J. Mod. Dyn.}, \text{2(4)} 719--740, 2008.

\bibitem[10]{CK} C. Croke, B. Kleiner, \textit{Conjugacy and rigidity for manifolds with a parallel vector field}, \newblock{J.Diff. Geom.} \textbf{39}, 659--680, 1994
\bibitem[11]{Cr} C. Croke, \textit{Rigidity for surfaces of non-positive curvature}, Comm. Math. Helv. 65 (1990), no. 1, 150-169.
\bibitem[12]{CK2} C. Croke and B. Kleiner, \textit{A rigidity theorem for simply connected manifolds without conjugate points}, Erg. Th. and Dyn. Syst. 18 (1998), pt. 4, 807-812.





\bibitem[13]{Di} E.I. Dinaburg.
\newblock \textsl{On the relations among various entropy characteristics of dynamical systems},
\newblock {\em Math. USSR, Izv}, \textbf{5)}:337--378, Zbl 0248.58007, 1971.


\bibitem[14]{Ebe} Patrick Eberlein, \textit{When is a geodesic flow of Anosov type?}, \newblock{J. Differential Geom.} \textbf{8(3)} 437-463 1973.

\bibitem[15]{FO}
J. Feldman and D. Ornstein,
\textsl{Semi-rigidity of horocycle flows over 
compact surfaces of variable negative curvature}, 
 \newblock{Ergodic Theory and Dynamical Systems}, \textbf{7(1)}, 49--72, 1987.

\bibitem[16]{FK}
R. Feres and A. Katok.
\textsl{Invariant tensor fields of dynamical systems with pinched Lyapunov
  exponents and rigidity of geodesic flows},
\newblock{ Ergodic Theory and Dynamical Systems}, \textbf{9(3)}, 427--432, 1989.

\bibitem[17]{F} Foulon, P.: \textit{Entropy rigidity of Anosov Flows in Dimension three},  \newblock{Ergodic Theory and Dynamical Systems} \textbf{21(4)}, 1101-1112, 2001.

\bibitem[18]{MF} A. Freire, R. Ma\~n\'e, \textit{On the entropy of the geodesic flow in manifolds without conjugate points}. \newblock{Inv. Math}, \textbf{69}, 375--392, 1982.

\bibitem[19]{L} Green, L. W., \textit{A Theorem of Hopf}. \newblock{Michigan Math. J.}, \textbf{5(1)} 31--34, 1958.



\bibitem[20]{hopf}
E. Hopf, \textsl{Closed surfaces without conjugate points}, \newblock{Proc. Nat. Acad. Sci. U. S. A.}, \textbf{34},  47--51, 1948.


\bibitem[21]{Katok82} A. Katok.
\newblock \textsl{Entropy and closed geodesic},
\newblock {\em Ergodic Theory and Dynamical Systems}, \textbf{2(3-4)}, 339--365, 1982.

\bibitem[22]{Klin} W. Klingenberg, \textit{Riemannian manifolds with geodesic flow of Anosov type}, \newblock{Annals of Mathematics}, \textbf{99(1)}, 1--13, 1974

\bibitem[23]{Kal} Kalinin, B.; \textit{Liv\~sic theorem for Matrix Cocycle}, \newblock{Ann. of Math.} \textbf{173(2)}, 1025--1042, 2011.

\bibitem[24]{KB} Burns, K., Gelfert, K.; \textit{Lyapunov Spectrum for Geodesic Flows of Rank 1 Surfaces}, \newblock{Discrete and Continuous Dynamical Systems - A} \textbf{14}, 1841--1872, 2014

\bibitem[25]{Kn} G. Knieper, Chapter 6, \textit{Hyperbolic dynamics and riemannian geometry}, Handbook of Dynamical Systems, vol. 1A, 453-545, Elsevier Science, 2002.

\bibitem[26]{LZ} F. Liu, X. Zhu. 
\newblock \textsl{The transitivity of geodesic flows on rank $1$ manifolds without focal points},
\newblock {\em Differential Geomeotry and its Applications}, \textbf{2}, 49--53, 2018.

\bibitem[27]{Ma} R. Ma\~n\'e, \textit{On a Theorem of Klingenberg}, Dynamical Systems and Bifurcation Theory, M. Camacho, M. Pacifico and F. Takens, eds., Pitman Research Notes in Math, 160, 319--345, 1987.



\bibitem[28]{IR} I. Dowell and S. Roma\~na, \textit{Some rigidity theorems for Anosov geodesic flows in Manifolds of Finite Volume}, arXiv.: 1709.09524, 2017.

\bibitem[29]{IR1} I. Dowell and S. Roma\~na, \textit{Contributions to the study of Anosov Geodesic Flows in non-compact manifolds}. \newblock{\em Discrete and Continuous Dynamical Systems} \textbf{40(9)}, 5149--1171, 2020.

\bibitem[30]{IR2} I. Dowell and S. Roma\~na, \textit{Riemannian manifolds with Anosov geodesic flow do not have conjugate points}. arXiv:2008.12898.

\bibitem[31]{Nina} N. Nina,  S. Roma\~na, \textit{A note on the density of periodic orbits of Anosov
geodesic flow in manifolds of finite volume}. arXiv:2401.18031.




\bibitem[32]{Osel}
V. I. ~Oseledets, \textsl{A multiplicative ergodic theorem. Lyapunov characteristic numbers for dynamical systems},
\newblock{ Trans. Moscow Math. Soc.}, \textbf{19}, 197--231, 1968. Moscow.Mat.Obsch. \textbf{19}, 179--210, 1968.
 
\bibitem[33]{Ot1} J.-P. Otal, \textit{Le spectre marqu\'e des loungueurs des surfaces \`a courbure n\'egative}, \newblock{Ann. of Math.} \textbf{131}, 151--162, 1990.

\bibitem[34]{Ot2} J.-P. Otal, \textit{Sur les longueurs des g\'eod\'esiques d'une m\'etrique \`a courbure n\'egative dans le disque}, \newblock{Comment Math. Helv.}, \textbf{65(2)}, 334--347, 1990.

\bibitem[35]{PW} K. Park,  T. Want. 
\newblock \textsl{Multifractal analysis of geodesic flows on surfaces
without focal points},
\newblock {\em Dynamical Systems, An International Journal}, \textbf{36(4)}, 656--684, 2021.



\bibitem[36]{Pa} G. Paternain, \textit{Geodesic flows}. \newblock{Progress in Mathematics, Birkhauser,} Volume 180, 1999.

\bibitem[37]{Pl} Jo.. Plante, \textit{Anosov Flows}, \newblock{American Journal of Math.} \textbf{94(3)}, 729--754, 1972.

\bibitem[38]{SL} J. De Simoi, M. Leguil, K. Vinhage, Y. Yang,  \textit{Entropy Rigidity for $3D$ Conservative Anosov Flows and Dispersing Billiards} 
\newblock{\em Geometric and Functional Analysis}, \textbf{30} 1337--1269, 2020



\bibitem[39]{t} P. Tomter, \textit{Anosov Flows on infra-homogeneous Spaces}, \newblock{Global Analysis (Proc. Sympos Pure Math.}, Vol XIV, Berkeley, Calif., 1968, Amer. Math. Soc., Providence, R.L.  299--327. 1968

\bibitem[40]{VO} M. Viana, K. Oliveira., \textit{Fundamentos da Teoria Erg\'odica}. SBM. 2019.

















\end{thebibliography}
\pagestyle{empty}

\end{document}